\title{The Vitali Covering Theorem\\ in the Weihrauch Lattice\thanks{This article is dedicated to Rod Downey on the occasion of his sixtieth birthday.}}
\author{Vasco Brattka\inst{1,2}\thanks{Vasco Brattka is supported by the National Research Foundation of South Africa.}
            \and Guido Gherardi\inst{3}
            \and Rupert H\"olzl\inst{2}\thanks{Rupert H\"olzl was partly supported by the Ministry of Education of Singapore through grant R146-000-184-112 (MOE2013-T2-1-062).}
            \and Arno Pauly\inst{4}}
\institute{Dept.\ of Mathematics \& App.\ Maths., University of Cape Town, South Africa
             \and Faculty of Computer Science, Universit\"at der Bundeswehr M\"unchen, Germany
             \and Dipartimento di Filosofia e Comunicazione, Universit\`{a} di Bologna, Italy
             \and D{\'e}partment d'Informatique, Universit{\'e} libre de Bruxelles, Belgium
           \email{Vasco.Brattka@cca-net.de}, 
           \email{Guido.Gherardi@unibo.it}, 
           \email{r@hoelzl.fr},
           \email{Arno.Pauly@cl.cam.ac.uk} }
\DeclareRobustCommand{\rvdots}{%
  \vbox{
    \baselineskip4\p@\lineskiplimit\z@
    \kern-\p@
    \hbox{.}\hbox{.}\hbox{.}
  }}
\def\N{\mathbb{N}}
\def\C{\mathrm{C}}
\def\PC{\mathrm{PC}}
\def\WKL{\mathrm{WKL}}
\def\WWKL{\mathrm{WWKL}}
\newcommand{\uint}{{[0,1]}}
\def\AA{{\mathcal A}}
\def\II{{\mathcal I}}
\def\JJ{{\mathcal J}}
\def\SS{{\mathcal S}}
\def\IN{{\mathbb{N}}}
\def\IQ{{\mathbb{Q}}}
\def\IR{{\mathbb{R}}}
\def\In{\subseteq}
\def\mto{\rightrightarrows}
\def\dom{{\rm dom}}
\def\range{{\rm range}}
\def\diam{{\rm diam}}
\def\mod{{\rm mod}}
\def\Int{{\rm Int}}
\newcommand{\SO}[1]{{{\bf\Sigma}^0_{#1}}}
\def\WKL{\text{\rm\sffamily WKL}}
\def\WWKL{\text{\rm\sffamily WWKL}}
\def\RCA{\text{\rm\sffamily RCA}}
\def\P{\mbox{\rm\sffamily P}}
\def\PC{\mbox{\rm\sffamily PC}}
\def\C{\mbox{\rm\sffamily C}}
\def\VCT{\text{\rm\sffamily VCT}}
\def\ACT{\text{\rm\sffamily{ACT}}}
\def\leqW{\mathop{\leq_{\mathsf{W}}}}
\def\equivW{\mathop{\equiv_{\mathsf{W}}}}
\def\leqSW{\mathop{\leq_{\mathsf{sW}}}}
\def\equivSW{\mathop{\equiv_{\mathsf{sW}}}}
\newcommand{\dash}{\mbox{-}}
\date{\today}
\newtheorem{fact}[theorem]{Fact}
\begin{document}

\maketitle

\begin{abstract}
We study the uniform computational content of the Vitali Covering Theorem for intervals using the tool of Weihrauch reducibility. 
We show that a more detailed picture emerges than what a related study by Giusto, Brown, and Simpson has revealed in the setting of reverse mathematics. 
In particular, different formulations of the Vitali Covering Theorem turn out to have different uniform computational content.
These versions are either computable or closely related to uniform variants of Weak Weak K\H{o}nig's Lemma.
\end{abstract}

\section{Introduction}

In order to analyze the uniform computational content of the Vitali Covering Theorem in different versions
it is useful to introduce some terminology that will allow us to phrase these  versions
in succinct terms. 

Let $\II=(I_n)_n$ be a sequence of open intervals $I_n\In\IR$, let $x\in\IR$ and $A\In\IR$.
We say that $x\in\IR$ is {\em captured} by $\II$, if for every $\varepsilon>0$ there exists
some $n\in\IN$ with $\diam(I_n)<\varepsilon$ and $x\in I_n$.
We call $\II$ a {\em Vitali cover} of $A$, if every $x\in A$ is captured by $\II$.
We say that $\II$ is {\em saturated}, if $\II$ is a Vitali cover of $\bigcup\II:=\bigcup_{n=0}^\infty I_n$.
Finally, we say that $\II$ {\em eliminates} $A$,
if the $I_n$ are pairwise disjoint and $\lambda(A\setminus\bigcup\II)=0$, where $\lambda$ denotes the Lebesgue measure on $\IR$.

Using this terminology we can now formulate the Vitali Covering Theorem 
(see Richardson~\cite[Theorem~7.3.2]{Ric09}).

\begin{theorem}[Vitali Covering Theorem]
	\label{thm:Vitali}
	Let $A\In[0,1]$ be Lebesgue measurable and let $\II$ be a sequence of intervals.
	If $\II$ is a Vitali cover of $A$, then there exists a subsequence $\JJ$ of $\II$
	that eliminates $A$.
\end{theorem}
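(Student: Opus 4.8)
The plan is to run the classical greedy Vitali selection, adapted to the sequential setting. \textbf{First} I would reduce to the bounded case: fix a bounded open $G\supseteq[0,1]$ with $\lambda(G)<\infty$, say $G=(-1,2)$, and pass to the subsequence of those $I_n$ that are contained in $G$. Since $\II$ captures every $x\in A$ by intervals of arbitrarily small diameter, and any interval of diameter $<1$ meeting a point of $[0,1]$ lies inside $G$, this subsequence is still a Vitali cover of $A$; so I may assume $I_n\subseteq G$ for all $n$. Consequently every pairwise disjoint subfamily $(J_k)_k$ satisfies $\sum_k\diam(J_k)=\sum_k\lambda(J_k)\le\lambda(G)<\infty$.

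\textbf{Next} I would build $\JJ$ greedily. Set $\FF_0:=\II$ and $s_0:=\sup\{\diam(I):I\in\FF_0\}$, and pick $J_0=I_{n_0}$ with $\diam(J_0)>s_0/2$. Having chosen pairwise disjoint $J_0,\dots,J_{k-1}$, let $\FF_k$ be the family of those $I_n$ disjoint from $J_0\cup\dots\cup J_{k-1}$, put $s_k:=\sup\{\diam(I):I\in\FF_k\}$, and choose $J_k\in\FF_k$ with $\diam(J_k)>s_k/2$ (this is possible since nonempty open intervals have positive diameter, so $s_k>0$ whenever $\FF_k\neq\emptyset$). If some $\FF_k$ is empty the construction halts, a case I treat separately below. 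Otherwise this yields a pairwise disjoint subsequence $\JJ=(J_k)_k$ of $\II$, and from $\sum_k\lambda(J_k)\le\lambda(G)<\infty$ I obtain $\diam(J_k)\to 0$ and $\sum_{k>N}\lambda(J_k)\to 0$ as $N\to\infty$.

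The heart of the argument is the covering estimate. Fix $N$ and take $x\in A$ with $x\notin\bigcup_{k\le N}\overline{J_k}$. Since this finite union of closed intervals misses $x$, capture supplies an $I\in\II$ with $x\in I$ of diameter small enough to be disjoint from every $\overline{J_k}$, $k\le N$. This $I$ cannot avoid all $J_k$: otherwise $I\in\FF_k$ for every $k$, forcing $\diam(I)\le s_k<2\diam(J_k)\to 0$, impossible. Let $k$ be least with $I\cap J_k\neq\emptyset$; then $k>N$ and $I\in\FF_k$, so $\diam(I)<2\diam(J_k)$, and a short geometric estimate shows $I\subseteq 5J_k$, the interval concentric with $J_k$ of five times the length; in particular $x\in 5J_k$. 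Hence $A\setminus\bigcup_k\overline{J_k}\subseteq A\setminus\bigcup_{k\le N}\overline{J_k}\subseteq\bigcup_{k>N}5J_k$ for every $N$, so $\lambda^*(A\setminus\bigcup_k\overline{J_k})\le 5\sum_{k>N}\lambda(J_k)\to 0$, where $\lambda^*$ denotes outer measure. Since $\bigcup_k\overline{J_k}$ and $\bigcup_k J_k$ differ only by the countable set of endpoints, I conclude $\lambda(A\setminus\bigcup\JJ)=0$, i.e.\ $\JJ$ eliminates $A$.

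The main obstacle I anticipate is the $5J_k$ covering step and its interaction with the capture hypothesis: I must enlarge past $\overline{J_k}$ rather than $J_k$ so that capture genuinely delivers a small disjoint interval, and I must extract the strict bound $\diam(I)<2\diam(J_k)$ from the minimality of $k$. The remaining subtlety is finite termination: if $\FF_k=\emptyset$, then no interval of $\II$ avoids the finite closed set $\bigcup_{i<k}\overline{J_i}$, so by capture $A\subseteq\bigcup_{i<k}\overline{J_i}$, whence $A\setminus\bigcup_{i<k}J_i$ lies in finitely many endpoints and is null; the finitely many $J_i$ already eliminate $A$. (Note that measurability of $A$ is not essential here, as a set of outer measure zero is automatically null.)
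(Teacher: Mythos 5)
Your proof is correct: it is the standard greedy Vitali selection (the $5r$-covering argument), with the termination case handled properly and the accurate observation that only outer measure of $A$ is needed. The paper does not prove Theorem~\ref{thm:Vitali} itself but cites it as a classical result from Richardson, where essentially this same argument is given, so your approach matches the proof the paper relies on.
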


The Vitali Covering Theorem has been studied in reverse mathematics by Brown, Giusto, and Simpson~\cite{BGS02} and was shown to coincide in proof strength with the  well-known principle $\WWKL_0$ that stands for 
Weak Weak K\H{o}nig's Lemma, see Simpson~\cite{Sim09}.
The following result can be found in Brown, Giusto, and Simpson~\cite[Theorems~3.3 and 5.5]{BGS02} and also in Simpson~\cite[Theorems~X.1.9 and X.1.13]{Sim09}.
For a related study in constructive analysis, see Diener and Hedin~\cite{DH12}.

\begin{theorem}[Brown, Giusto, and Simpson~\cite{BGS02}]
\label{thm:BGS02}
Over $\RCA_0$, the following statements are equivalent to each other:
\begin{enumerate}
\item Weak Weak K\H{o}nig's Lemma $\WWKL_0$,
\item The Vitali Covering Theorem (Theorem~\ref{thm:Vitali}) for $A=[0,1]$,
\item For any sequence of intervals $\II=(I_n)_n$ with $[0,1]\In\bigcup\II$ it holds that $\sum_{n=0}^\infty\lambda(I_n)\geq1$.
\end{enumerate}
\end{theorem}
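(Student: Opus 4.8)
The plan is to prove the three statements equivalent over $\RCA_0$ by establishing the two equivalences (1)$\Leftrightarrow$(3) and (2)$\Leftrightarrow$(3), taking statement~(3)---the purely metric principle that a cover of $[0,1]$ has total length at least~$1$---as the convenient hub. Throughout I would first normalise the given intervals to have rational endpoints, enlarging each $I_n$ to a rational open interval $I_n'\supseteq I_n$ with $\lambda(I_n')<\lambda(I_n)+\varepsilon 2^{-n}$, so that containments such as ``$[\sigma]\In\bigcup_{k<n}I_k'$'' for a dyadic interval $[\sigma]$ become decidable and all sets and measures involved stay available in $\RCA_0$.

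For (1)$\Rightarrow$(3) I would argue contrapositively: assuming $s:=\sum_n\lambda(I_n)<1$, I build the decidable tree $T\In 2^{<\IN}$ whose level-$n$ strings $\sigma$ are exactly those whose dyadic interval $[\sigma]$ is not yet contained in $\bigcup_{k<n}I_k$. Since the covered dyadic intervals at level~$n$ are pairwise disjoint and sit inside an open set of measure $\le s$, the level densities satisfy $|T\cap 2^n|2^{-n}\ge 1-s>0$, so $\WWKL_0$ yields an infinite path, i.e.\ a point of $[0,1]$ lying outside $\bigcup_nI_n$; hence no family of total length $<1$ can cover $[0,1]$. For the converse (3)$\Rightarrow$(1), given a tree $T$ with $\lim_n|T\cap 2^n|2^{-n}=\delta>0$, I would consider the antichain $E$ of minimal strings leaving $T$ and observe, via the telescoping identity $|2^n\setminus T|2^{-n}=\sum_{\sigma\in E,\,|\sigma|\le n}2^{-|\sigma|}$, that $\sum_{\sigma\in E}2^{-|\sigma|}=1-\delta<1$. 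If $T$ had no infinite path, then every non-dyadic $x\in[0,1]$ would leave $T$, so the (slightly fattened, open, rational) exit intervals would cover $[0,1]$ with total length still $<1$, contradicting~(3); thus $T$ has a path and $\WWKL_0$ follows.

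For (2)$\Rightarrow$(3) I would turn an arbitrary cover $[0,1]\In\bigcup_nI_n$ into a Vitali cover by letting $\II'$ be the sequence of all rational subintervals of the various $I_n$: since each $x\in[0,1]$ lies in the interior of some $I_n$, it is captured by $\II'$. Applying~(2) produces a pairwise disjoint subsequence $\JJ$ that eliminates $[0,1]$, so $\sum_{J\in\JJ}\lambda(J)=\lambda(\bigcup\JJ)=1$; grouping each $J$ under some $I_n$ containing it and using disjointness gives $\sum_n\lambda(I_n)\ge\sum_{J\in\JJ}\lambda(J)\ge1$, which is~(3).

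The main obstacle is (3)$\Rightarrow$(2), the effective content of the Vitali Covering Theorem itself. Here I would run Vitali's greedy selection: maintaining a finite disjoint family with union $C_k$, I choose the next interval $J_k\in\II$ disjoint from $C_k$ whose diameter is comparable to the largest available one---arranged scale by scale so as never to invoke a supremum of a sequence of reals, which need not exist in $\RCA_0$. This yields disjoint $J_k$ with $\sum_k\lambda(J_k)=s\le1$, and the classical $5r$-covering estimate shows that for every $N$ the leftover $[0,1]\setminus\bigcup_kJ_k$ is contained in $\bigcup_{m\ge N}5J_m$ together with finitely many endpoints, where $5J_m$ denotes the interval with the same centre as $J_m$ and five times the length. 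Hence the family $\{J_k:k<N\}\cup\{5J_m:m\ge N\}$ covers $[0,1]$, and feeding this into~(3) gives $\sum_{k<N}\lambda(J_k)+5\sum_{m\ge N}\lambda(J_m)\ge1$; letting $N\to\infty$, so that the tail of the convergent series $\sum_k\lambda(J_k)$ vanishes, forces $s\ge1$ and therefore $\lambda([0,1]\setminus\bigcup\JJ)=0$. I expect the genuine difficulties to be bookkeeping ones: formalising the Lebesgue measure of open and closed sets, the subadditivity and tail estimates, and the $5r$-covering lemma inside $\RCA_0$, and arranging the greedy choice so that every diameter and interval used remains $\RCA_0$-definable.
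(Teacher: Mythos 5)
The paper offers no proof of this theorem: it is quoted as a known result, with pointers to Brown, Giusto, and Simpson~\cite{BGS02} and Simpson~\cite{Sim09}, so there is no in-paper argument to compare yours against. Judged on its own terms, your sketch reconstructs the standard route from that literature. The tree/density translation for (1)$\Leftrightarrow$(3) is correct, with two small caveats: the quantity $\lim_n|T\cap 2^n|2^{-n}$ need not exist as a real in $\RCA_0$, so you should work with a rational lower bound $\delta$ on the (non-increasing) densities rather than an actual limit; and you should verify explicitly that a path of your tree yields an uncovered point (it does: if $x\in I_k$ then a sufficiently small dyadic interval around $x$ is contained in $I_k$, so the corresponding node leaves $T$). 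The direction (2)$\Rightarrow$(3) via rational subintervals, disjointness, and grouping under the original $I_n$ is fine.

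The one step that does not work as literally written is the conclusion of (3)$\Rightarrow$(2). From $\sum_{k<N}\lambda(J_k)+5\sum_{m\ge N}\lambda(J_m)\ge 1$ you infer $\sum_k\lambda(J_k)\ge 1$ and then ``therefore $\lambda([0,1]\setminus\bigcup\JJ)=0$''. That last inference needs $\sum_k\lambda(J_k\cap[0,1])\ge 1$, not $\sum_k\lambda(J_k)\ge 1$: the intervals of a Vitali cover of $[0,1]$ selected by the greedy process may protrude beyond the endpoints $0$ and $1$, and the protruding mass is invisible to your inequality, so positive leftover measure inside $[0,1]$ is not yet excluded. The repair is routine and stays within your framework: apply statement (3), adapted by an affine change of coordinates to the finite union of closed intervals $K_N:=[0,1]\setminus\bigcup_{k<N}J_k$ (which is covered, up to finitely many points, by $\bigcup_{m\ge N}5J_m$), to obtain $\lambda(K_N)\le 5\sum_{m\ge N}\lambda(J_m)$ directly; letting $N\to\infty$ then drives the leftover measure to $0$ without ever comparing $\sum_k\lambda(J_k)$ to $1$. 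This is in fact where the strength of (3) enters, exactly as you intended. With that adjustment and the formalization bookkeeping you already flag (decidable rational intervals, measures of open and closed sets, the scale-by-scale greedy choice avoiding suprema), the proposal is a faithful reconstruction of the Brown--Giusto--Simpson argument.
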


In a series of articles \cite{GM09,Pau10,Pau10a,BG11,BG11a,BBP12,BGM12,BGH15a,DDH+16} by different authors the Weihrauch lattice was established as a 
uniform, resource-sensitive and hence more fine-grained version of reverse mathematics.
Starting with work of Brattka and Pauly~\cite{BP10}, Dorais, Dzhafarov, Hirst, Mileti and Shafer~\cite{DDH+16} and Brattka, Gherardi and H\"olzl \cite{BGH15,BGH15a},
probabilistic problems were studied in the Weihrauch lattice. In particular {\em positive choice} $\PC_X$ was considered, which is the problem of finding
a point in a closed $A\In X$ of positive measure, and the following relation to Weak Weak K\H{o}nig's Lemma was established in the
Weihrauch lattice \cite[Proposition~8.2 and Theorem~9.3 and its proof]{BGH15a}.

\begin{fact}[Weak Weak K\H{o}nig's Lemma]
\label{fact:WWKL} 
\
\begin{enumerate}
\item $\WWKL\equivSW\PC_{2^\IN}\equivSW\PC_{[0,1]}$,
\item $\WWKL\times\C_\IN\equivSW\PC_{\IN\times2^\IN}\equivSW\PC_{\IR}$.
\end{enumerate}
\end{fact}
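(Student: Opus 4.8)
The plan is to treat the two numbered items separately, and within each to exhibit explicit computable pre- and post-processing maps; recall that for a strong reduction $\leqSW$ the post-processing may read only the oracle's output, not the original input, so the coding maps must be chosen to make the solution readable from the output alone. I would also freely use the known equivalence $\WKL\equivSW\C_{2^\IN}$.

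For the first equivalence of item (1), $\WWKL\equivSW\PC_{2^\IN}$, the point is that a closed set $A\In2^\IN$ given by negative information and a binary tree are essentially the same datum, with the points of $A$ being exactly the infinite paths and $\lambda(A)=\lambda([T])$. For $\WWKL\leqSW\PC_{2^\IN}$ I would send a tree $T$ to $[T]$, observing that from $T$ one can enumerate all cylinders $[w]$ with $[w]\cap[T]=\emptyset$ (the $\Sigma^0_1$ event that the subtree above $w$ dies out), i.e.\ produce negative information on $[T]$; the returned path is the desired point. For the converse the only subtlety is that negative information yields $T$ only as a co-c.e.\ tree, whereas $\WWKL$ expects a decidable one; I would repair this by the standard device of putting $w\in T'$ iff $[w]\not\In U_{|w|}$, where $U_s$ is the union of the cylinders declared disjoint from $A$ by stage $s$. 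One checks that $T'$ is a decidable tree with $[T']=A$, so $\lambda([T'])=\lambda(A)>0$ and positivity of measure is preserved.

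For the second equivalence of item (1), $\PC_{2^\IN}\equivSW\PC_{[0,1]}$, I would use two measure-respecting codings. For $\PC_{[0,1]}\leqSW\PC_{2^\IN}$ use the binary-expansion map $b\colon2^\IN\to[0,1]$, which is computable, surjective and measure-preserving: send $A$ to $b^{-1}(A)$ (negative information transfers because a cylinder maps onto a closed dyadic interval, whose disjointness from $A$ is semidecidable by compactness) and post-process $x\in b^{-1}(A)$ to $b(x)\in A$. The opposite reduction is exactly where the non-injectivity of $b$ at the dyadic points would create an ambiguity that a strong reduction cannot tolerate; I avoid it by embedding $2^\IN$ onto a symmetric fat Cantor set $C\In[0,1]$ of positive measure via a computable homeomorphism $e$ with computable inverse. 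Then $\PC_{2^\IN}\leqSW\PC_{[0,1]}$ sends $A$ to $e(A)\In C$, which is closed with $\lambda(e(A))=\lambda(C)\,\lambda(A)>0$ since $e$ carries the uniform measure to a multiple of Lebesgue measure on $C$, and post-processes $y\in e(A)$ to the well-defined $e^{-1}(y)\in A$.

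For item (2) I would first reduce $\PC_{\IR}\equivSW\PC_{\IN\times2^\IN}$ by splitting $\IR\cong\IZ\times[0,1)$ into its unit blocks, coding the integer block by a natural number and applying the block-wise analogue of the maps above, counting-times-Lebesgue measure on $\IN\times2^\IN$ matching Lebesgue measure on $\IR$. It then remains to show $\PC_{\IN\times2^\IN}\equivSW\PC_{2^\IN}\times\C_\IN$. One direction is easy: given closed $A\In2^\IN$ with $\lambda(A)>0$ and nonempty $B\In\IN$, form $B\times A$ and read off the two coordinates of a solution. The hard direction $\PC_{\IN\times2^\IN}\leqSW\PC_{2^\IN}\times\C_\IN$ is where I expect the main obstacle. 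Writing $A_n=\{p:(n,p)\in A\}$ with $\sum_n\lambda(A_n)>0$, the natural attempt feeds $\C_\IN$ the co-c.e.\ nonempty set $\{n:A_n\ne\emptyset\}$ to locate a fiber and feeds $\PC_{2^\IN}$ the stacked set $A^{*}=\overline{\bigsqcup_n e_n(A_n)}$, embedding the $n$-th fiber into the cylinder $[1^n0]$, so that $\lambda(A^{*})=\sum_n 2^{-n-1}\lambda(A_n)>0$; any returned point inside some $[1^n0]$ decodes directly to a solution. The difficulty is the unavoidable accumulation point $1^\infty\in A^{*}$ (present whenever infinitely many fibers are nonempty, and unavoidable since the non-compact $\IN\times2^\IN$ cannot be embedded with positive measure into the compact $2^\IN$ without adding limit points): the oracle may adversarially return it, and then the stacked point alone carries no fiber information. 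This ``escape to infinity'' is exactly what the parallel $\C_\IN$ factor must absorb, and the delicate core of the proof — which I would isolate as a separate lemma — is to coordinate the two answers so that the selected index is always matched with a genuinely recoverable point, while respecting that $\PC_{2^\IN}$ gives no help on nonempty fibers of measure zero (indeed $\C_{2^\IN}=\WKL\nleqSW\PC_{2^\IN}\times\C_\IN$). Resolving this coordination is the heart of the matter and is what pins the problem precisely at $\WWKL\times\C_\IN$ rather than at either factor alone.
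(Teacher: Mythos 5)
First, note that the paper does not prove this statement at all: it is imported as a Fact from Brattka, Gherardi and H\"olzl [BGH15a, Proposition~8.2 and Theorem~9.3], so your attempt can only be measured against the argument given there. Your treatment of item~(1) is correct and is essentially the standard one: the tree/closed-set translation with the repair $w\in T'\iff[w]\not\In U_{|w|}$ to turn a co-c.e.\ tree into a decidable one, the binary-expansion map for $\PC_{[0,1]}\leqSW\PC_{2^\IN}$, and a fat Cantor set for the converse all work as you describe. Likewise the block decomposition giving $\PC_\IR\equivSW\PC_{\IN\times2^\IN}$ and the easy direction $\WWKL\times\C_\IN\leqSW\PC_{\IN\times2^\IN}$ are fine.

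The genuine gap is the hard direction of item~(2), $\PC_{\IN\times2^\IN}\leqSW\PC_{2^\IN}\times\C_\IN$: you diagnose the coordination problem accurately and then defer its resolution to an unproved ``separate lemma'', but that lemma \emph{is} the content of item~(2), so nothing has been proved. Moreover, the stacked set $A^{*}$ you propose cannot be repaired: in a strong reduction the decoder sees only the two oracle answers, an infinite product of fiber constraints has measure zero, and a disjoint stacking forces the uninformative limit point. The missing idea (and the one used in the cited source) is a priority construction over the co-c.e.\ set $D=\{\langle n,k\rangle:\lambda(A_n)\geq 2^{-k}\}$: fix an enumeration $j_0,j_1,\dots$ of all pairs and let the \emph{current candidate} at stage $s$ be the least $i$ with $j_i$ not yet refuted. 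This index is non-decreasing in $s$ and stabilizes at $c^{*}=\min\{i:j_i\in D\}$; crucially, $\C_\IN$ can recover $c^{*}$ exactly, because $\{\langle i,s\rangle: j_i\in D$ and every $i'<i$ is refuted by stage $s\}$ is co-c.e.\ and nonempty, and any of its elements determines $c^{*}$. The $\PC_{2^\IN}$-instance is then built on independent coordinates of $2^\IN\cong(2^\IN)^\IN$: while candidate $i=\langle n,k\rangle$ is current, enforce ``coordinate $i$ lies in the current approximation of $A_n$'', freezing the constraint just before its measure would drop below $2^{-k}$ when the candidate is abandoned. Only the finitely many indices $i\leq c^{*}$ that ever become current carry constraints, each of measure at least $2^{-k_i}$, so the resulting set has positive measure as a \emph{finite} product of positive reals; the constraint at $c^{*}$ is never frozen, so coordinate $c^{*}$ of the returned point lies in $A_{n^{*}}$ and is decodable from the $\C_\IN$-answer alone. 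This ``finitely many mind changes'' structure is exactly what places the problem at $\WWKL\times\C_\IN$, and it is absent from your proposal.
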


Here $\equivSW$ stands for equivalence with respect to strong Weihrauch reducibility. 
We will provide exact definitions of the relevant terms in the following Section~\ref{sec:preliminaries}.
In this article we are going to extend the work by Brown, Giusto and Simpson~\cite{BGS02}
using the tools of the Weihrauch lattice and we will demonstrate how the above mentioned equivalence classes
and others feature in this approach. 

One of our main insights is related to the observation that different logical formulations of the Vitali Covering Theorem
turn out to have different uniform computational content, a phenomenon that appeared in a similar way in the study of the Baire Category Theorem by
Brattka and Gherardi~\cite{BG11a} and Brattka, Hendtlass and Kreuzer~\cite{BHK15a}.
The following three propositional formulas essentially correspond to the different logical formulations of the Vital Covering Theorem that we consider:

\begin{enumerate}
\setcounter{enumi}{-1}
	\item $(S\wedge C)\to E$,
	\item $(S\wedge\neg E)\to\neg C$,
	\item $\neg E\to(\neg S\vee\neg C)$.
\end{enumerate}

Here $S$ corresponds to the statement that the input sequence is saturated, $C$ to the statement that it is a cover
and $E$ to the statement that there is an eliminating subsequence. 
The stated propositional formulas are equivalent to each other when we have the full strength of classical logic at our disposal.
More precisely, we are going to use the following versions of the Vitali Covering Theorem for the special case $A=[0,1]$:

\begin{enumerate}
\setcounter{enumi}{-1}
	\item $\VCT_0$: For every Vitali cover $\II$ of $[0,1]$ there exists a subsequence $\JJ$ of $\II$ that eliminates $[0,1]$.
	\item $\VCT_1$: For every saturated $\II$ that does not admit a subsequence which eliminates $[0,1]$, there exists a point $x\in[0,1]$ that is not covered by $\II$.
	\item $\VCT_2$: For every sequence $\II$ that does not admit a subsequence which eliminates~$[0,1]$, there exists a point $x\in[0,1]$ that is not captured by $\II$.
\end{enumerate}

It is clear that 0.\ is equivalent to 2.\ since they are contrapositive forms of each other.
We also obtain ``0.$\Rightarrow$1.'' since every saturated cover of $[0,1]$ is a Vitali cover of $[0,1]$. 
Finally, we obtain ``1.$\Rightarrow$0.'' since every Vitali cover $\II$ of $[0,1]$ can be extended to a saturated sequence $\II'$ by only adding intervals
that do not overlap with the closed set $[0,1]$. Every subsequence $\JJ'$ of $\II'$ that eliminates $[0,1]$ then leads to a subsequence $\JJ$ of $\II$
that eliminates $[0,1]$.
Our main results on the Vitali Covering Theorem can now be phrased as follows.
The proofs will be presented in Section~\ref{sec:Vitali}. 

\begin{theorem}[Vitali Covering Theorem]
We obtain that
\begin{enumerate}
\setcounter{enumi}{-1}
\item  $\VCT_0$ is computable, 
\item $\VCT_1\equivSW\PC_{[0,1]}\equivSW\WWKL$ and 
\item $\VCT_2\equivSW\PC_\IR\equivSW\WWKL\times\C_\IN$.
\end{enumerate}
\end{theorem}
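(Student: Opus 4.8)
The plan is to treat the three assertions separately and, by Fact~\ref{fact:WWKL}, to reduce everything to positive choice, so that it suffices to prove that $\VCT_0$ is computable, $\VCT_1\equivSW\PC_{[0,1]}$, and $\VCT_2\equivSW\PC_\IR$. For item~0 I would build the eliminating subsequence by a greedy Vitali selection performed in stages: maintaining finitely many \emph{strictly separated} intervals (pairwise disjoint closures) with uncovered region $V\In\uint$, at stage $k$ I search for a finite subfamily of intervals $I_n$ with $\overline{I_n}\In V$, pairwise strictly separated, whose total length exceeds $(1-2^{-k})\lambda(V)$. The existence of such a family follows from the classical Vitali Covering Theorem (Theorem~\ref{thm:Vitali}) applied to $V$ (via a $5r$-type argument), so each search halts; what makes the procedure \emph{computable} rather than merely continuous is that ``$\overline{I_n}\In V$'' is semidecidable for the computable open set $V$, that the measure of a finite union of intervals is a computable real, and that ``total length exceeds a rational'' is semidecidable. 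Working with strictly separated intervals sidesteps the fact that exact disjointness of open intervals is only co-c.e., and the uncovered measure tends to $0$, so the collected subsequence eliminates $\uint$.

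For item~1 it is enough, by Fact~\ref{fact:WWKL}, to give two strong reductions between $\VCT_1$ and $\PC_{[0,1]}$. From a closed $A\In\uint$ with $\lambda(A)>0$ I let $\II$ enumerate all rational open intervals contained in $\uint\sm A$; this $\II$ is saturated, admits no eliminating subsequence since $\lambda(\uint\sm\bigcup\II)=\lambda(A)>0$, and any point not covered by $\II$ lies in $A$, giving $\PC_{[0,1]}\leqSW\VCT_1$. Conversely, from a saturated $\II$ without an eliminating subsequence I output the co-c.e.\ closed set $A:=\uint\sm\bigcup\II$; the key lemma is that $\lambda(A)>0$, since $\lambda(A)=0$ would make $\II$ a Vitali cover of a full-measure subset of $\uint$, and Theorem~\ref{thm:Vitali} would then extract an eliminating subsequence, contrary to hypothesis. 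A point of $A$ is exactly an uncovered point, so $\VCT_1\leqSW\PC_{[0,1]}$, and both postprocessings are the identity.

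For item~2, again by Fact~\ref{fact:WWKL}, it suffices to prove $\VCT_2\equivSW\PC_\IR$. For $\VCT_2\leqSW\PC_\IR$, write $U_m$ for the union of the $I_n$ with $\diam(I_n)<2^{-m}$ and $C_m:=\uint\sm U_m$, so the captured set is $G=\bigcap_mU_m$ and the non-captured set is $\bigcup_mC_m$. Since $\II$ is a Vitali cover of $G$, Theorem~\ref{thm:Vitali} shows that $\lambda(G)=1$ would already yield an eliminating subsequence; hence the hypothesis forces $\lambda(G)<1$ and some $\lambda(C_m)>0$. I then place an affine copy of each increasing co-c.e.\ closed set $C_m$ into the interior of $[m,m+1]$, obtaining a single co-c.e.\ closed $\tilde A\In\IR$ of positive measure, and decode a returned point of $[m,m+1]$ back to the corresponding non-captured point of $\uint$. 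The unbounded search for a level $m$ with $\lambda(C_m)>0$ is exactly the $\C_\IN$-content and the choice inside $C_m$ the $\WWKL$-content.

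The harder direction, and the step I expect to be the main obstacle, is $\PC_\IR\leqSW\VCT_2$. Splitting $A=\bigsqcup_nA_n$ with $A_n=A\cap[n,n+1]$, I place an affine copy $D_n$ of $A_n$ into the interior of disjoint pieces $E_n\In\uint$ and let $\II$ enumerate all rational open intervals contained in $\uint\sm\bigcup_nD_n$, so that points of $E_n\sm D_n$ are captured while points of $D_n$ lie in no interval and hence are not captured. Because the $E_n$ must accumulate inside $\uint$, the delicate point is that the accumulation points and piece boundaries would otherwise be spurious non-captured points that the \emph{strong} reduction cannot decode. The device resolving this is to adjoin, for each such computable point $t$, a tower of intervals shrinking to $t$: since capture demands membership at \emph{all} scales, such a tower captures exactly $t$ and touches each $D_n$ only at coarse scales, hence never captures a planted point. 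With this junk removed the non-captured set is exactly $\bigsqcup_nD_n$, every output decodes by locating its piece $E_n$ and inverting the affine map into $A_n\In A$, and positivity of some $\lambda(A_n)$ guarantees the $\VCT_2$-hypothesis. I expect the careful verification of this tower device---that it captures precisely the intended boundary and accumulation points while leaving all planted copies non-captured---to be the technical heart of the proof.
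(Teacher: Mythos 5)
Items~0 and~1 and the direction $\VCT_2\leqSW\PC_\IR$ of item~2 essentially retrace the paper's arguments (stagewise greedy selection using upper semicomputability of the measure; vitalization; planting the increasing closed sets $C_m$ of non-captured points into disjoint unit intervals of $\IR$), and they are sound; in fact your derivation of ``some $\lambda(C_m)>0$'' by applying Theorem~\ref{thm:Vitali} to the captured set $G$ is a small simplification over the paper, which invokes the Brown--Giusto--Simpson strengthening on almost Vitali covers for this step. One repair is needed in two places: ``enumerate all rational open intervals contained in'' an open set presented as a growing union of basic intervals is not a computable operation (containment of an open interval in such a union is not semidecidable); you must instead enumerate the rational intervals contained in a \emph{single} enumerated basic interval, which is exactly Lemma~\ref{lem:vitalization} and yields the same saturated sequence with the same union.

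The genuine gap is in $\PC_\IR\leqSW\VCT_2$, at precisely the step you defer. Membership of your $\JJ$ in $\dom(\VCT_2)$ is not a statement about the measure of the non-captured set: it asserts that no \emph{pairwise disjoint} subfamily covers $\uint$ up to a null set, and non-captured points can perfectly well be \emph{covered} by a single coarse tower interval. Since the pieces $E_n$ must accumulate at some $t\in\uint$, the coarsest interval $T=(t-2^{-j_t},t+2^{-j_t})$ of the tower at $t$ contains $E_n\supseteq D_n$ entirely for all sufficiently large $n$. If the input $A$ has $\lambda(A_n)>0$ only for such large $n$ --- which you cannot exclude uniformly, as detecting the right $n$ is exactly the $\C_\IN$-content of the problem --- then $\{T\}$ together with a pairwise disjoint subfamily of the saturated fillers contained in $\uint\setminus\overline{T}$ (which eliminates $(\uint\setminus\overline{T})\setminus\bigcup_m D_m$ by Theorem~\ref{thm:Vitali}, as in Lemma~\ref{lem:VCT0}) eliminates $\uint$, so $\JJ\notin\dom(\VCT_2)$ and the reduction fails. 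The paper's Proposition~\ref{prop:CN-VCT2} is engineered exactly against this adversary: each tower center $x_n$ has planted regions on \emph{both} sides at carefully chosen dyadic distances, plus a one-sided arm $(x_n,1+2^{-n})$, and a case distinction on whether the disjoint subfamily uses the arm or one of the symmetric intervals shows that some planted region with an admissible index is touched by nothing except its planted copy, which cannot eliminate it. You would need to import this (or an equivalent) geometric arrangement into your direct construction; alternatively, follow the paper and prove $\C_\IN\times\VCT_2\leqSW\VCT_2$, from which $\PC_\IR\equivSW\WWKL\times\C_\IN\leqSW\VCT_1\times\C_\IN\leqSW\VCT_2$ follows via Fact~\ref{fact:WWKL} and your item~1.
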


It can be argued that $\C_\IN$ is the analogue of $\SO{1}$--induction in the Weihrauch lattice (see Brattka and Rakotoniaina~\cite{BR15}) and
hence the classes $\WWKL$ and $\WWKL\times\C_\IN$ have no distinguishable non-uniform content in reverse mathematics,
where $\SO{1}$--induction is already included in $\RCA_0$.

In this context it is also interesting to note that the equivalence classes of $\WWKL$ and $\WWKL\times\C_\IN$ characterize certain
natural classes of probabilistic problems. In \cite[Corollary~3.4]{BGH15a} the following was proved.

\begin{fact}[Las Vegas Computability]
\label{fact:Las-Vegas}
The following holds for any $f$.
\begin{enumerate}
\item $f\leqW\PC_{[0,1]}\iff f$ is Las Vegas computable,
\item $f\leqW\PC_\IR\iff f$ is Las Vegas computable with finitely many mind changes.
\end{enumerate}
\end{fact}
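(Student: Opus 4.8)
The plan is to prove both equivalences by transporting everything to Cantor space via Fact~\ref{fact:WWKL} and then matching the combinatorics of positive choice against the two ingredients of a Las Vegas computation: access to a random bit source (captured by $\WWKL\equivSW\PC_{2^\IN}$) and, in the second item, a finite-mind-change search (captured by $\CN$). Recall the notions involved: $\PC_X$ is the problem that takes a closed set $A\In X$ with $\lambda(A)>0$, presented by negative information, and returns some point of $A$; and a multifunction $f$ is Las Vegas computable if there is a computable $F$ such that for every name $p$ of an input $x\in\dom(f)$ the set of advice strings $r\in2^\IN$ on which $F(r,p)$ observably commits to a name of some element of $f(x)$ has positive measure. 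In the second item we additionally allow $F$ to revise its output finitely often before committing.

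For the first equivalence I would treat the two directions separately. For Las Vegas $\Rightarrow f\leqW\PC_{[0,1]}$, observe that for each input name $p$ the set $S_p\In2^\IN$ of successful advice strings is closed (failure is recognizable in finite time, so its complement is c.e.\ open) and is computable from $p$, and it has positive measure by assumption; hence the reduction computes $S_p$ from $p$, applies $\PC_{2^\IN}$ to obtain some $r\in S_p$, and outputs $F(r,p)$, which witnesses $f\leqW\PC_{2^\IN}\equivSW\PC_{[0,1]}$ by Fact~\ref{fact:WWKL}(1). For the converse $f\leqW\PC_{[0,1]}\Rightarrow$ Las Vegas, write the reduction as $f=H\circ\langle\id,\PC_{2^\IN}\circ K\rangle$ for computable $H,K$, so that from $p$ one computes a positive-measure closed set $B_p=K(p)\In2^\IN$ any point of which yields, via $H$, a correct output. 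The Las Vegas machine reads its advice $r$ as a point of $2^\IN$ and follows $r$ down the tree $T_{B_p}=\{w\in2^{<\IN}:[w]\cap B_p\neq\emptyset\}$, whose dead nodes are enumerable from the negative information on $B_p$, committing output only along prefixes not yet recognized as dead. Since $\{r\in B_p\}$ has measure $\lambda(B_p)>0$ and on this event $r$ is a genuine point of $B_p$, the output $H\langle p,r\rangle$ is correct. This converse is the step I expect to be the main obstacle: one must arrange the bit-by-bit production so that the machine never commits to output that a later stage would refute, and verify that the measure of advice strings producing a full correct output is exactly $\lambda(B_p)>0$ rather than merely nonzero in the limit.

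For the second equivalence, the only new feature of $\PC_\IR$ over $\PC_{[0,1]}$ is that a positive-measure closed $A\In\IR$ need not lie in a fixed compact interval, so one must first locate $n$ with $\lambda(A\cap[-n,n])>0$. This is precisely a finite-mind-change search (guess the current least candidate $n$, and increase it whenever the negative information forces $\lambda(A\cap[-n,n])=0$), which is the operational content of $\CN$; once $n$ is fixed the remaining task is $\PC_{[-n,n]}\equivSW\PC_{[0,1]}$. Thus $\PC_\IR\equivSW\WWKL\times\CN$ by Fact~\ref{fact:WWKL}(2), and layering this finite-mind-change search on top of the Las Vegas construction of the first item yields exactly the functions that are Las Vegas computable with finitely many mind changes. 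The main obstacle here is to show that the two layers combine uniformly and independently, matching the product $\WWKL\times\CN$ and guaranteeing that the number of mind changes stays finite along every advice string that eventually succeeds.
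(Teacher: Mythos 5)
First, a remark on the comparison itself: the paper does \emph{not} prove this fact --- it is imported verbatim from Brattka, Gherardi and H\"olzl \cite[Corollary~3.4]{BGH15a}, and the paper explicitly declines even to define Las Vegas computability. So there is no in-paper proof to compare against, and your attempt has to be judged on its own terms against the cited source. Your item~1 is essentially the standard argument and is sound: failure recognition at finite stages makes the success set $S_p$ closed and co-c.e.\ uniformly in $p$, giving $f\leqW\PC_{2^\IN}\equivSW\PC_{[0,1]}$ via Fact~\ref{fact:WWKL}; conversely, following the advice through the co-c.e.\ tree of $B_p=K(p)$ and aborting on dead prefixes yields a Las Vegas machine whose success set is exactly $B_p$. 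The ``main obstacle'' you flag there is not real: the Las Vegas model permits declaring failure \emph{after} partial output has been written, so no commitment problem arises.

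Item~2, however, has genuine gaps. (a) Your mind-change trigger ``increase $n$ whenever the negative information forces $\lambda(A\cap[-n,n])=0$'' never fires: from negative information the measure is only upper semicomputable, so $\lambda(A\cap[-n,n])=0$ is a $\Pi^0_2$ event that is not refutable at any finite stage; only events of the form $\lambda(A\cap[-n,n])<2^{-k}$ are c.e.\ recognizable. One must therefore search over \emph{pairs} $(n,k)$ (exactly the quantitative flavour of $\P_{>2^{-k}}\C_{[0,1]}$ appearing in Section~\ref{sec:additivity}), change one's mind when the upper approximation drops below $2^{-k}$, and then argue that while a guess is active the surviving advice measure stays at least $2^{-k}$ --- this is what makes eager failure on dead advice harmless, via a product over the finitely many guesses. (b) Your assertion that the two layers ``combine uniformly and independently, matching the product $\WWKL\times\CN$'' conflates composition with product: feeding the outcome of the $\CN$-search into the choice of which set to sample is sequential, not a product reduction, and the product equivalence $\PC_\IR\equivSW\WWKL\times\CN$ is itself the nontrivial imported content of Fact~\ref{fact:WWKL}; you should reduce to and from $\PC_\IR$ directly and treat that fact as a black box. (c) Most importantly, you never address the direction from a mind-changing Las Vegas machine back to $f\leqW\PC_\IR$: the final output of such a machine is \emph{not} continuously computable from $(p,r)$, since one can never know whether another reset is coming. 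The missing idea is to use the discrete component of $\PC_{\IN\times2^\IN}\equivSW\PC_\IR$ to bound the last reset stage: the sets $C_m:=\{r\in S_p:\mbox{no reset occurs after stage }m\}$ are closed uniformly in $p$ (``a reset after stage $m$'' is c.e.\ open), and $\lambda(C_m)\to\lambda(S_p)>0$, so the union of the $\{m\}\times C_m$ is a valid positive choice instance; given $(m,r)$ one simulates up to stage $m$, discards, and then copies the output monotonically. Without (a) and (c) your proof of item~2 does not go through.
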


Since these classes of probabilistically computable maps will not play any further role in this article,
we will skip the precise definitions and refer the interested reader to Brattka, Gherardi and H\"olzl~\cite{BGH15,BGH15a}.

In Section~\ref{sec:additivity} we further analyze item 3.\ of Theorem~\ref{thm:BGS02}, a statement which is related
to countable additivity in reverse mathematics. We show that there is a formalization $\ACT$ of this statement
that we call {\em Additive Covering Theorem} and that turns out to be equivalent to $*\dash\WWKL$, yet
another variant of Weak Weak K\H{o}nig's Lemma that is even weaker than $\WWKL$ from the uniform perspective. 
In the diagram in Figure~\ref{fig:Vitali} we present a survey of our results.

\section{Preliminaries}
\label{sec:preliminaries}

We assume that the reader is familiar with the concepts defined in the introductory part of Brattka, Gherardi, and H\"olzl~\cite[Section 2]{BGH15a}. 
We recall some of the most central concepts.
Firstly, Weihrauch reducibility and its strong counterpart are defined for multi-valued functions $f:\In X\mto Y$ on represented
spaces $X,Y$. Representations are surjective partial mappings  from Baire space $\IN^\IN$ onto the represented spaces
and they provide the necessary structures to speak about computability and other concepts.
Since we are not using representations in any formal way here, we refrain from presenting further details and we point the reader to Weihrauch~\cite{Wei00} and Pauly~\cite{Pau16}.

\begin{definition}[Weihrauch reducibility]\rm
Let $f:\In X\mto Y$ and $g:\In W\mto Z$ be multi-valued functions on represented spaces. 
\begin{enumerate}
\item $f$ is said to be {\em Weihrauch reducible} to $g$, in symbols $f\leqW g$, if there are computable
	$K:\In X\mto W$, $H:\In X\times Z\mto Y$ with 
      $\emptyset\not=H(x, gK(x))\In f(x)$ for all $x\in\dom(f)$.
\item $f$ is said to be {\em strongly Weihrauch reducible} to $g$, in symbols $f\leqSW g$, if there are computable 
      $K:\In X\mto W$, $H:\In Z\mto Y$ with $\emptyset\not=HgK(x)\In f(x)$ for all $x\in\dom(f)$.
\end{enumerate}
The corresponding equivalences are denoted by $\equivW$ and $\equivSW$, respectively.
\end{definition}

In some results we are referring to products of multi-valued functions, which we define next.

\begin{definition}[Products]\rm
For $f:\In X\mto Y$ and $g:\In W\mto Z$ we define $f\times g:\In X\times W\mto Y\times Z$
by $(f\times g)(x,w):=f(x)\times g(w)$ and $\dom(f\times g):=\dom(f)\times\dom(g)$.
\end{definition}

Since we are going to prove that certain versions of the Vitali Covering Theorem can be characterized with the help of certain versions of positive choice,
we need to define positive choice next.  For this purpose we use
the negative information representation of $\AA_-(X)$, which represents closed sets $A\In X$ by enumerating rational open balls $B(x_i,r_i)$ that exhaust
the complement of $A$, that is $X\setminus A=\bigcup_{i=0}^\infty B(x_i,r_i)$.
The $x_i$ are taken from some canonical dense subset of $X$ and the $r_i$ are rational numbers.
For details we refer the reader to Brattka, Gherardi and H\"olzl~\cite{BGH15a}.

\begin{definition}[Choice and positive choice]\rm
Let $X$ be a separable metric space with a Borel measure $\mu$ and let $\AA_-(X)$ denote the set of closed subsets $A\In X$ with respect to negative information.
\begin{enumerate}
\item By $\C_X:\In\AA_-(X)\mto X,A\mapsto A$ we denote the {\em choice problem} of $X$
        with $\dom(\C_X):=\{A:A\not=\emptyset\}$.
\item By $\PC_X:\In\AA_-(X)\mto X,A\mapsto A$ we denote the {\em positive choice problem} of $X$
        with $\dom(\PC_X):=\{A:\mu(A)>0\}$.
\end{enumerate}
\end{definition}

We will mostly work with the real numbers $\IR$ or the unit interval $[0,1]$, both equipped with the Lebesgue measure $\lambda$.
In Section~\ref{sec:additivity} we will also use a quantitative version $\P_{>\varepsilon}\C_{[0,1]}$ of $\PC_{[0,1]}$ which is the restriction 
of $\PC_{[0,1]}$ to closed sets $A\In[0,1]$ with $\lambda(A)>\varepsilon$ for $\varepsilon>0$.

\section{Vitali Covering in the Weihrauch degrees}
\label{sec:Vitali}

We now translate the three logically equivalent versions of the Vitali Covering Theorem that were presented in the introduction 
into their corresponding multi-valued functions and hence into Weihrauch degrees. 

By $\Int$ we denote the set of sequences $(I_n)_n$ of open Intervals $I_n=(a,b)$ with~$a,b\in\IQ$ where we let $(a,b)=\emptyset$ if $b\leq a$. Formally we represent $\Int$ using the canonical representation of the set~$(\IQ^2)^\IN$.

\begin{definition}[Vitali Covering Theorem]\rm
	We define the following multi-valued functions.
	\begin{enumerate}
	\setcounter{enumi}{-1}
		\item
		$\VCT_0: \In\Int\mto\Int,\II\mapsto\{\JJ: \JJ\mbox{ is a subsequence of $\II$ that eliminates }[0,1]\}$
		and $\dom(\VCT_0)$ contains all $\II\in\Int$ that are Vitali covers of $[0,1]$.
		\item
		$\VCT_1:\In\Int\mto[0,1],\II\mapsto[0,1]\setminus\bigcup\II$
		and $\dom(\VCT_1)$ contains all $\II\in\Int$ that are saturated and that do not have a subsequence that eliminates $[0,1]$.
		\item
		$\VCT_2 :\In\Int\mto[0,1],\II\mapsto\left\{x\in[0,1]: \mbox{$x$ is not captured by $\II$}\right\}$ and\linebreak
		$\dom(\VCT_2)$ contains all $\II\in\Int$ that do not have a subsequence that  eliminates $[0,1]$.
	\end{enumerate}
\end{definition}

We note that $\dom(\VCT_1)\In\dom(\VCT_2)$ and that $\VCT_1$ is a restriction of $\VCT_2$ (see Proposition~\ref{prop:VCT1-VCT2}). 
By the Vitali Covering Theorem (Theorem~\ref{thm:Vitali}) the sequences $\II\in\dom(\VCT_2)$ cannot be Vitali covers of $[0,1]$.

\subsection{The computable version}

Brattka and Pauly~\cite{BP10} noticed that $\VCT_0$~is computable; we will give a formal proof in this subsection. As a preparation we need the following lemma, where for $A\In\IR$ we denote by $A^\circ$ and $\partial A$
the interior and the boundary of $A$, respectively.

\begin{lemma}
	\label{lem:VCT0}
	Let $A\In[0,1]$ be a closed set with $\lambda(A)>0$ and $\lambda(\partial A)=0$.
	If $\II=(I_n)_{n\in\N}$ is a Vitali cover of $A$, then the subsequence $\II_A$ of $\II$
	that consists only of those $I_n$ with $I_n\In A$ is a Vitali cover of $A^\circ$.
\end{lemma}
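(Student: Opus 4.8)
The plan is to verify directly that every point of $A^\circ$ is captured by $\II_A$, exploiting the fact that an interior point has a whole neighbourhood inside $A$, so that any sufficiently small capturing interval through it is forced to lie in $A$ and hence belongs to $\II_A$.

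First I would fix $x\in A^\circ$ and, using that $A^\circ$ is open, choose $\delta>0$ with $(x-\delta,x+\delta)\In A$. Given an arbitrary $\varepsilon>0$, set $\eta:=\min\{\varepsilon,\delta\}$. Since $x\in A^\circ\In A$ and $\II$ is a Vitali cover of $A$, the point $x$ is captured by $\II$; hence there is an index $n$ with $x\in I_n$ and $\diam(I_n)<\eta$.

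The key step is to observe that such an $I_n$ automatically lies in $A$: for every $y\in I_n$ we have $|y-x|<\diam(I_n)<\delta$ (here we use $x\in I_n$), so $y\in(x-\delta,x+\delta)\In A$; thus $I_n\In A$ and $I_n$ is a member of the subsequence $\II_A$. Since moreover $\diam(I_n)<\eta\leq\varepsilon$, this exhibits an element of $\II_A$ of diameter below $\varepsilon$ containing $x$. As $\varepsilon>0$ was arbitrary, $x$ is captured by $\II_A$, and ranging over all $x\in A^\circ$ shows that $\II_A$ is a Vitali cover of $A^\circ$.

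I do not expect a genuine obstacle: the argument hinges on the single observation that an interval of diameter below $\delta$ containing an interior point $x$ cannot leave the $\delta$-neighbourhood of $x$, and so stays inside $A$. One point worth flagging is that the two measure-theoretic hypotheses $\lambda(A)>0$ and $\lambda(\partial A)=0$ are not actually used in this argument. Since for a closed set $A$ one has $A=A^\circ\cup\partial A$ with $A^\circ$ and $\partial A$ disjoint, these hypotheses give $\lambda(A^\circ)=\lambda(A)-\lambda(\partial A)=\lambda(A)>0$; they therefore only serve to guarantee that $A^\circ$ is non-empty (indeed of positive measure), which is presumably what is required where the lemma is applied. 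In the degenerate case $A^\circ=\emptyset$ the conclusion holds vacuously.
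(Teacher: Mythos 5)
Your proof is correct and rests on the same key observation as the paper's: an open interval of diameter less than $\delta$ that contains a point $x$ with $(x-\delta,x+\delta)\In A$ must lie inside $A$ and hence belongs to $\II_A$. The paper merely packages this slightly differently (it first shows that $\II_A$ is saturated and that $\bigcup\II_A=A^\circ$), and, exactly as you note, it uses the hypotheses $\lambda(A)>0$ and $\lambda(\partial A)=0$ only to conclude $\lambda(A^\circ)>0$, so that $A^\circ\not=\emptyset$ and the subsequence $\II_A$ is well-defined.
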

\begin{proof}
	We note that $\lambda(A)>0$ and $\lambda(\partial A)=0$ implies $\lambda(A^\circ)=\lambda(A\setminus\partial A)>0$.
	In particular, $A^\circ\not=\emptyset$ and the sequence $\II_A$ is well-defined.
	We claim that $\II_A$ is saturated.
	Let $x\in\bigcup\II_A$ and $\varepsilon>0$. Then there is an $n$ such that $x\in I_n\In A$.
	Since $\II$ is a Vitali cover of $A$, there is some $k$ such that $x\in I_k\In I_n$ and $\diam(I_k)<\varepsilon$.
	In particular, $I_k\In A$ and hence $I_k$ is a component of $\II_A$. Thus $\II_A$ is saturated.
	Similarly, it follows that 
      $\bigcup\II_A=A^\circ$. 
      Here the inclusion ``$\In$'' follows from the definition of $\II_A$ and
	we only need to prove ``$\supseteq$''. For every $x\in A^\circ$ there is some $\varepsilon>0$ with $(x-\varepsilon,x+\varepsilon)\In A$
	and since $\II$ is saturated there is some $k$ with $x\in I_k\In (x-\varepsilon,x+\varepsilon)$. Hence $I_k$ is part of $\II_A$
	and $x\in\bigcup\II_A$. This shows that $\bigcup\II_A=A^\circ$, and hence $\II_A$ is a Vitali cover of $A^\circ$.
	\qed
\end{proof}

Now we are prepared to prove that $\VCT_0$ is computable.

\begin{theorem}
	\label{thm:VCT0}
	$\VCT_0$ is computable.
\end{theorem}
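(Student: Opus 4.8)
The plan is to give an explicit algorithm that, given a Vitali cover $\II\in\dom(\VCT_0)$ of $[0,1]$, computably produces a subsequence $\JJ$ of pairwise disjoint intervals with $\lambda([0,1]\setminus\bigcup\JJ)=0$. The key observation to exploit is that for $\VCT_0$ the input is \emph{guaranteed} to be a Vitali cover of the whole interval $[0,1]$, so every point is captured and we never have to \emph{verify} covering — we only have to greedily select disjoint intervals. This greediness is exactly what makes the problem computable rather than merely true.

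First I would set up a greedy selection reminiscent of the classical proof of the Vitali Covering Theorem, but arranged so that each decision is made on the basis of finite information about the rational endpoints of the $I_n$. At stage $s$ I maintain a finite list of already-chosen, pairwise disjoint intervals $J_0,\dots,J_{k-1}$ from $\II$, and I search for the next interval $I_n$ (in the enumeration) that is nonempty, disjoint from all $J_0,\dots,J_{k-1}$, and whose diameter is, say, at least half of the supremum of diameters of all currently-available disjoint candidates; I then append it as $J_k$. Since the endpoints are rational, testing nonemptiness, disjointness, and comparing diameters are all decidable, so the whole construction is computable and outputs an infinite subsequence $\JJ=(J_k)_k$ (or pads with empty intervals once no disjoint candidate remains, which cannot happen while uncovered positive measure persists). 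The output format lands in $\Int$ as required.

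The main obstacle, and the heart of the proof, is verifying that the greedy choice actually eliminates $[0,1]$, i.e.\ that $\lambda([0,1]\setminus\bigcup\JJ)=0$. Here I would argue by contradiction in the style of the classical Vitali argument: suppose $E:=[0,1]\setminus\bigcup_k J_k$ had positive measure. The selected intervals are disjoint and contained in $[0,1]$ (up to a boundary set of measure zero), so $\sum_k\lambda(J_k)\le 1$ and in particular $\lambda(J_k)\to 0$. Pick a point $x\in E$ that is a point of outer density for $E$ and not in the closure of any finitely many $J_k$; since $\II$ is a Vitali cover of $[0,1]$, for every $\varepsilon>0$ there is an arbitrarily small $I_n\ni x$ with $\diam(I_n)<\varepsilon$. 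Because $\lambda(J_k)\to 0$, there is a stage by which every remaining chosen interval is shorter than any fixed small candidate at $x$; this forces such an $I_n$ at $x$ to intersect some already-chosen $J_k$ (otherwise the greedy rule would eventually have selected an interval of comparable size near $x$). A standard measure-theoretic estimate — each $J_k$ can absorb only the portion of $E$ within a bounded multiple of its own length — then bounds $\lambda(E)$ by a constant times $\sum_{k\ge s}\lambda(J_k)$, which tends to $0$, giving the contradiction $\lambda(E)=0$.

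To streamline this I would invoke Lemma~\ref{lem:VCT0} to reduce the general covering argument to a cleaner setting: whenever positive uncovered measure remains, one can restrict attention to intervals contained in the uncovered region and apply the lemma to the relevant closed set, ensuring the greedy procedure always has a genuine supply of small disjoint candidates wherever $E$ has density one. The only subtlety I expect to need care with is making the ``comparable diameter'' threshold in the greedy rule robust enough that the supremum used in the selection is always attained up to a constant factor by an available interval; choosing a fixed ratio such as $\tfrac12$ and arguing via the Vitali cover property that arbitrarily small intervals exist at every uncovered point handles this cleanly, so no genuinely nonconstructive step enters and computability is preserved throughout.
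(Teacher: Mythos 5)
Your overall strategy (a greedy Vitali-type selection followed by the classical absorption estimate) is reasonable in spirit, but the selection step as you state it is not computable, and this is a genuine gap rather than a technicality. The rule ``choose an available interval whose diameter is at least half of the supremum of the diameters of all currently-available disjoint candidates'' requires comparing against a supremum taken over an \emph{infinite} set of candidates; from the given enumeration of $\II$ this supremum is only approximable from below, so at no finite stage can you certify that a candidate you have seen is within a factor $2$ of it (a much larger available interval may appear later in the enumeration). If instead you read ``currently available'' as ``available among the finitely many intervals enumerated so far'', the selection becomes computable but the verification collapses: the absorption argument needs that the \emph{first} chosen interval $J_k$ met by an unchosen interval $I_m$ satisfies $\diam(J_k)\geq c\cdot\diam(I_m)$, and with a finite horizon this fails when $I_m$ meets a $J_k$ that was chosen before $I_m$ entered the pool. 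The ``subtlety'' you flag at the end is exactly this point, and your proposed fix (arbitrarily small intervals exist at every uncovered point) addresses the existence of small candidates, not the missing upper bound on the supremum; so the argument does not go through as written.

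The paper avoids greediness altogether. It iterates the following: maintain a finite union of closed rational intervals $A_n$ with $\lambda(A_n)>0$ together with the subfamily $\II_n$ of intervals contained in $A_n$ (which Lemma~\ref{lem:VCT0} guarantees is again a Vitali cover of $A_n^\circ$); use the classical Vitali Covering Theorem purely as an existence statement to know that \emph{some} finite pairwise disjoint subfamily leaves less than $2^{-n}$ of $A_n^\circ$ uncovered; and then \emph{find} such a subfamily by exhaustive search, using that $\lambda$ is upper semi-computable on closed sets, so that the condition $\lambda(A_n^\circ\setminus\bigcup_{i=0}^{k_n}I_i)<2^{-n}$ is semi-decidable. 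Searching for a verifiable finite certificate is what makes the procedure computable. If you want to keep a greedy flavour you would need a comparably careful device (for instance, choosing at stage $k$ a maximal-diameter available interval among the first $k$ enumerated, and then reworking the measure estimate accordingly), which is substantially more delicate than your sketch suggests.
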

\begin{proof}
	Given a Vitali cover $\II$ of $[0,1]$, we need to find a subsequence $\JJ$ of $\II$ that eliminates $[0,1]$.
	We will compute such a subsequence inductively. Initially, $\JJ$~is an empty sequence.
	We start with $A_0:=[0,1]$ and $\II_0:=\II$.
	We assume that in step $n$ of the computation the set $A_n$ is a non-empty finite union of closed rational intervals
	with $\lambda(A_n)>0$ and that $\II_n$ is a Vitali cover of the interior~$A_n^\circ$.
	The fact that $A_n$ is a non-empty finite union of rational intervals implies $\lambda(\partial A_n)=0$.
	Given a Vitali cover $\II_n$ of $A_n^\circ$ there exists a subsequence $\JJ_n$ of $\II_n$ that eliminates $A_n^\circ$ by the
	Vitali Covering Theorem  (Theorem~\ref{thm:Vitali}). Since the Lebesgue measure $\lambda$ is upper semi-computable
	on closed sets $A\In[0,1]$, by a systematic search
	we can find a $k_n\in\IN$ and a finite subsequence $(I_0,...,I_{k_n})$ of~$\II_n$ of pairwise disjoint intervals
	such that
	\[0<\lambda\left(A_n^\circ\setminus\bigcup_{i=0}^{k_n}I_i\right)<2^{-n}.\]
	We compute
	$A_{n+1}:=A_n\setminus\bigcup_{i=0}^{k_n}I_i$
	as a finite union of closed rational intervals and
	we add the intervals $I_0,...,I_{k_n}$ to the set $\JJ$.
	Since $\lambda(\partial A_n)=0$, we obtain that $0<\lambda(A_{n+1})<2^{-n}$.
	We now compute $\II_{n+1}:=(\II_n)_{A_{n+1}}$ (as defined in Lemma~\ref{lem:VCT0}).
	Then $\II_{n+1}$ is a Vitali cover of~$A_{n+1}^\circ$ by Lemma~\ref{lem:VCT0} and we can continue the inductive construction in step $n+1$.
	Altogether, this construction leads to a subsequence $\JJ$ of $\II$ of pairwise disjoint intervals $\JJ$ such that ${[0,1]\setminus\bigcup\JJ}=\bigcap_{n=0}^\infty A_n$.
	Since $\lambda(A_n)<2^{-n}$, it follows that $\lambda([0,1]\setminus\bigcup\JJ)=0$. Hence $\JJ$ eliminates $[0,1]$.
\qed
\end{proof}

\subsection{The first non-computable version}

In the previous subsection we observed that the most straight-forward way of formalizing the Vitali Covering Theorem in the Weihrauch degrees is computable. 
To obtain non-computability results, we need to look at contrapositive versions of the theorem. 
The idea here is that given a collection of intervals $\II$ that violates some of the requirements for
being a Vitali cover, we want to find an $x \in[0,1]$ witnessing this
violation. Again, there is more than one formalization for this idea, as
there are different ways and degrees of violating the
requirements.

It will turn out that these different formalizations produce mathematical tasks of different
computational strengths, that is, falling into different Weihrauch degrees. The first result in this direction that we will prove is that $\VCT_1$
is strongly equivalent to Weak Weak K\H{o}nig's Lemma. This corresponds to Theorem~\ref{thm:BGS02} by Brown, Giusto, and Simpson.

To show $\WWKL\leqSW\VCT_1$ we will use the following lemma that shows that we can computably refine any sequence of open intervals to a saturated one.

\begin{lemma}[Vitalization]
	\label{lem:vitalization}
	There exists a computable map $V: \Int\to\Int$ such that
	$\bigcup\II=\bigcup V(\II)$
	for all $\II\in\Int$
	and $\range(V)$ only consists of saturated sequences of intervals.
\end{lemma}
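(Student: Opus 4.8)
The plan is to build $V(\II)$ by enumerating, for each interval $I_n$ of $\II$, \emph{all} of its rational open subintervals at once; this refinement has the same union as $\II$ but captures every point of that union. Concretely, I would fix a computable enumeration $(n_k,p_k,q_k)_{k\in\IN}$ of $\IN\times\IQ\times\IQ$, and given a name of $\II=(I_n)_n$ with $I_n=(a_n,b_n)$ and $a_n,b_n\in\IQ$, define the $k$-th interval of $V(\II)$ to be $(p_k,q_k)$ whenever $(p_k,q_k)\In I_{n_k}$, and the empty interval $(0,0)$ otherwise. Thus the nonempty members of $V(\II)$ are exactly the rational open intervals contained in some $I_n$.

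To see that $V$ is a total computable map $\Int\to\Int$, note that $(p_k,q_k)\In(a_{n_k},b_{n_k})$ holds precisely when $a_{n_k}\le p_k<q_k\le b_{n_k}$, and the rational endpoints $a_{n_k},b_{n_k}$ can be read directly off the name of $\II$; hence the membership test is decidable and the output is a genuine element of $\Int$ (degenerate cases $I_n=\emptyset$ simply contribute nothing). For the union I would check both inclusions: ``$\supseteq$'' is immediate since every nonempty member of $V(\II)$ lies inside some $I_n$, while for ``$\In$'' I use that each $I_n=(a_n,b_n)$ itself has rational endpoints, so $I_n$ is one of the enumerated subintervals and therefore $I_n\In\bigcup V(\II)$. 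Together these give $\bigcup\II=\bigcup V(\II)$.

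It remains to verify saturation of $V(\II)$. Given $x\in\bigcup V(\II)=\bigcup\II$, I pick $n$ with $x\in I_n=(a_n,b_n)$ and let $\varepsilon>0$. By density of $\IQ$ I choose rationals $p\in(\max(a_n,x-\varepsilon/2),x)$ and $q\in(x,\min(b_n,x+\varepsilon/2))$; then $a_n\le p<x<q\le b_n$ and $q-p<\varepsilon$, so $(p,q)$ is a rational open subinterval of $I_n$, hence a member of $V(\II)$, it contains $x$, and $\diam((p,q))<\varepsilon$. Thus $x$ is captured by $V(\II)$, which makes $V(\II)$ a Vitali cover of its own union, i.e.\ saturated.

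The construction is essentially mechanical, so the only delicate point — the ``main obstacle'' — is meeting the union and saturation requirements \emph{simultaneously}: I must add \emph{only} true subintervals of the $I_n$ so that nothing leaks outside $\bigcup\II$, yet add \emph{enough} arbitrarily small ones to capture every point. The rational endpoints of the $I_n$ (ensuring the inclusion ``$\In$'' of the union) together with the density of $\IQ$ (ensuring capture at every scale) are exactly what let both directions succeed.
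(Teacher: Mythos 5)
Your construction is exactly the paper's: the published proof simply ``systematically adds to $\II$ all rational intervals $I=(a,b)$ for which there is an $n$ with $I\In I_n$'', and your enumeration of triples $(n_k,p_k,q_k)$ with the decidable containment test is a correct, more detailed implementation of that same idea (with the harmless cosmetic difference that you recover each $I_n$ as a subinterval of itself rather than keeping the original sequence). The verification of the union equality, saturation, and computability is sound.
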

\begin{proof}
	Given $\II=(I_n)_n$ we systematically add to $\II$ all rational intervals ${I=(a,b)}$ 
	for which there is an $n\in\IN$ with $I\In I_n$. This leads in a computable way to a saturated sequence $\JJ$ with $\bigcup\II=\bigcup\JJ$.
\qed
\end{proof}

Now we are prepared to prove that $\VCT_1$ is strongly equivalent to $\PC_{[0,1]}$.

\begin{theorem}
	\label{thm:VCT1}
	$\VCT_1\equivSW\PC_{[0,1]}$.
\end{theorem}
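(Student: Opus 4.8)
The plan is to establish the two strong reductions $\VCT_1\leqSW\PC_{[0,1]}$ and $\PC_{[0,1]}\leqSW\VCT_1$ separately; the equivalence with $\WWKL$ then follows immediately from Fact~\ref{fact:WWKL}(1). In both directions the output transformation $H$ can be taken to be the identity, since a solution of the target problem is literally a correct solution of the source problem; all the real work lies in the computable preprocessing map $K$ and in verifying that its image lands in the correct domain.

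For $\VCT_1\leqSW\PC_{[0,1]}$, given a saturated $\II\in\dom(\VCT_1)$ I would let $K$ output the closed set $A_\II:=[0,1]\setminus\bigcup\II$ in the negative information representation of $\AA_-([0,1])$. This is computable because $\bigcup\II$ is an open set that can be enumerated directly from $\II$, so its complement in $[0,1]$ is exhausted from outside. The crucial point is that $A_\II\in\dom(\PC_{[0,1]})$, i.e.\ $\lambda(A_\II)>0$. I argue this contrapositively: if $\lambda([0,1]\setminus\bigcup\II)=0$, consider the measurable set $B:=[0,1]\cap\bigcup\II$. Since $\II$ is saturated it is a Vitali cover of $\bigcup\II\supseteq B$, so Theorem~\ref{thm:Vitali} yields a pairwise disjoint subsequence $\JJ$ of $\II$ with $\lambda(B\setminus\bigcup\JJ)=0$. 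Then, using $\lambda([0,1]\setminus B)=\lambda([0,1]\setminus\bigcup\II)=0$,
\[
\lambda\Big([0,1]\setminus\bigcup\JJ\Big)\leq\lambda([0,1]\setminus B)+\lambda\Big(B\setminus\bigcup\JJ\Big)=0,
\]
so $\JJ$ eliminates $[0,1]$, contradicting $\II\in\dom(\VCT_1)$. Hence $\lambda(A_\II)>0$, and any point returned by $\PC_{[0,1]}(A_\II)$ lies in $[0,1]\setminus\bigcup\II=\VCT_1(\II)$, so $H=\id$ completes the reduction.

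For the converse $\PC_{[0,1]}\leqSW\VCT_1$, given $A\in\dom(\PC_{[0,1]})$ by negative information I would first read off rational open intervals $(J_k)_k$ with $[0,1]\cap\bigcup_k J_k=[0,1]\setminus A$, and then apply the Vitalization map from Lemma~\ref{lem:vitalization} to obtain a saturated sequence $\II:=V((J_k)_k)$ with $\bigcup\II=\bigcup_k J_k$; this composite is $K$. By construction $[0,1]\setminus\bigcup\II=A$, so it only remains to check that $\II$ has no eliminating subsequence, which places it in $\dom(\VCT_1)$. This is the easy monotonicity direction: any pairwise disjoint subsequence $\JJ$ of $\II$ satisfies $\lambda([0,1]\setminus\bigcup\JJ)\geq\lambda([0,1]\setminus\bigcup\II)=\lambda(A)>0$, so no such $\JJ$ can eliminate $[0,1]$. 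Consequently $\VCT_1(\II)=[0,1]\setminus\bigcup\II=A$, and again the identity on the output side returns a point of $A=\PC_{[0,1]}(A)$.

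I expect the main obstacle to be the positivity argument in the first direction, namely extracting from the failure of the Vitali conclusion that the uncovered set has positive measure; this is exactly where the genuine content of the Vitali Covering Theorem enters, and one must be careful to apply Theorem~\ref{thm:Vitali} to the measurable set $[0,1]\cap\bigcup\II$ rather than to $[0,1]$ itself, absorbing the measure-zero discrepancy between the two. The remaining bookkeeping---that $\bigcup\II$, its complement, and the vitalized sequence are all handled computably and that the representations of $\AA_-([0,1])$ and $\Int$ line up---is routine.
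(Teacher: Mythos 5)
Your proposal is correct and follows essentially the same route as the paper: the direction $\PC_{[0,1]}\leqSW\VCT_1$ uses the Vitalization Lemma~\ref{lem:vitalization} exactly as in the paper, and the direction $\VCT_1\leqSW\PC_{[0,1]}$ establishes positivity of $\lambda([0,1]\setminus\bigcup\II)$ by applying Theorem~\ref{thm:Vitali} to the saturated cover of $\bigcup\II$ and deriving a contradiction with $\II\in\dom(\VCT_1)$, just as the paper does. Your only (harmless) refinement is applying Theorem~\ref{thm:Vitali} to $[0,1]\cap\bigcup\II$ rather than to $\bigcup\II$ itself, which matches the theorem's stated hypothesis $A\In[0,1]$ more literally.
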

\begin{proof}
	Given a sequence $\II$ of open intervals with ${A=[0,1]\setminus\bigcup\II}$ and $\lambda(A)>0$,
	 by Lemma~\ref{lem:vitalization} we can compute a saturated sequence $V(\II)$ with ${A=[0,1]\setminus\bigcup V(\II)}$.
	Since $\lambda(A)>0$, it is clear that $V(\II)$ does not have a subsequence that eliminates $[0,1]$.
	Hence $V(\II)\in\dom(\VCT_1)$ and $\VCT_1(V(\II))= A$, which implies $\PC_{[0,1]}\leqSW\VCT_1$.
	
	Now let $\II$ be a saturated sequence of intervals that does not have a subsequence that eliminates $[0,1]$.
	Clearly we can compute $A:=[0,1]\setminus\bigcup\II$. Since $\II$~is a Vitali cover of $\bigcup\II$,
	there is a subsequence $\JJ$ of $\II$ that eliminates $\bigcup\II$ by the Vitali Covering Theorem 
	(Theorem~\ref{thm:Vitali}).
	If $\lambda(A)=0$, then this subsequence $\JJ$ also  eliminates~$[0,1]$. This is not possible by assumption and hence $\lambda(A)>0$.
	Consequently, $\VCT_1(\II)=\PC_{[0,1]}(A)$, which proves $\VCT_1\leqSW\PC_{[0,1]}$.
\qed
\end{proof}

Since it is known that $\PC_{[0,1]}$ has computable inputs that do not admit computable outputs (see for example Brattka, Gherardi and H\"olzl~\cite[Theorem~12]{BGH15}), 
we obtain the following corollary as an immediate consequence (which also follows by Lemma~\ref{lem:vitalization} from a classical result of Kreisel and Lacombe~\cite[Th\'eor\`eme~VI]{KL57} on singular coverings, 
see also \cite[Theorem~4.28]{Wei00}).

\begin{corollary}[Diener and Hedin~{\cite[Theorem~9]{DH12}}]
There exists a computable Vitali cover $\JJ$ of the computable points in $[0,1]$ so that every subsequence $\II=(I_n)_n$
consisting of pairwise disjoint intervals
satisfies $\sum_{n=0}^\infty\lambda(I_n)<1$.
\end{corollary}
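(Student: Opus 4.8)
The plan is to read the covering $\JJ$ off a computable instance of $\PC_{[0,1]}$ and then apply the Vitalization Lemma (Lemma~\ref{lem:vitalization}). First I would use that $\PC_{[0,1]}$ has a computable input without computable output: there is a computable closed set $A\In[0,1]$ with $\lambda(A)>0$ containing no computable point. The negative information representation of $A$ provides a computable sequence $\II=(I_n)_n\in\Int$ of rational open intervals whose union agrees with $[0,1]\sm A$ on $[0,1]$. Allowing the two boundary intervals to overhang $[0,1]$ by a shrinking sequence of small rational amounts (so that $0$ and $1$ can be captured) while keeping the total overhang below $\lambda(A)$, I obtain $(\bigcup\II)\cap[0,1]=[0,1]\sm A$ together with $\lambda(\bigcup\II)<1$. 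Because $A$ has no computable point, every computable $x\in[0,1]$ satisfies $x\in[0,1]\sm A\In\bigcup\II$.

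Next I would set $\JJ:=V(\II)$. By Lemma~\ref{lem:vitalization} this is a computable saturated sequence with $\bigcup\JJ=\bigcup\II$. Being saturated, $\JJ$ is a Vitali cover of $\bigcup\JJ$, and since $\bigcup\JJ$ contains all computable points of $[0,1]$, the sequence $\JJ$ is a computable Vitali cover of the computable points, as desired.

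For the length bound, let $\II'=(I_n)_n$ be any subsequence of $\JJ$ consisting of pairwise disjoint intervals. By countable additivity of $\lambda$ on disjoint sets and $\bigcup\II'\In\bigcup\JJ$, I get $\sum_{n=0}^\infty\lambda(I_n)=\lambda(\bigcup\II')\le\lambda(\bigcup\JJ)=\lambda(\bigcup\II)<1$. The step I expect to be the main obstacle is exactly this one, for a subtle reason: after Vitalization $\JJ$ contains \emph{all} rational subintervals of the original $I_n$, so the sum of lengths over all of $\JJ$ is infinite and carries no information; it is the hypothesis of pairwise disjointness that converts the sum of lengths into the measure of a union, which is trapped inside $\bigcup\JJ=\bigcup\II$ and hence stays strictly below $1$. (Alternatively, as the statement indicates, one may start from the classical Kreisel--Lacombe singular covering of the computable reals and apply the same Vitalization step.)
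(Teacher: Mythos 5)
Your proposal is correct and is essentially the paper's own argument: the paper derives the corollary as an immediate consequence of the existence of a computable closed $A\In[0,1]$ with $\lambda(A)>0$ and no computable points, combined with the Vitalization Lemma (via the reduction $\PC_{[0,1]}\leqSW\VCT_1$). Your additional care in bounding the overhang of the boundary intervals beyond $[0,1]$ (so that $\lambda(\bigcup\JJ)<1$, which is what makes the disjoint-subsequence sum stay below $1$) is a detail the paper leaves implicit, and you handle it correctly.
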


\subsection{The second non-computable version}

The previous result identifies the computational strength of $\VCT_1$ with that of the well-studied Weihrauch degree $\WWKL$. 
The natural next  question to ask is whether $\VCT_2$ is of different strength and, if yes, to determine that strength precisely. 
Both questions will be answered in this section. We begin with the following observation.

\begin{proposition}
	\label{prop:VCT1-VCT2}
	$\VCT_1\leqSW\VCT_2$.
\end{proposition}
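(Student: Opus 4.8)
The plan is to show that on $\dom(\VCT_1)$ the two problems coincide as multi-valued functions, so that the reduction is witnessed by the identity maps $K=\id$ and $H=\id$. Since a restriction of a problem always strongly reduces to it via the identity, the entire content of the proposition is the verification that $\VCT_1$ really is such a restriction.

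First I would record the domain inclusion $\dom(\VCT_1)\In\dom(\VCT_2)$. Both domains are cut out by the condition that $\II$ admits no subsequence eliminating $[0,1]$, and this is the only constraint defining $\dom(\VCT_2)$; every $\II\in\dom(\VCT_1)$ satisfies it (and is moreover saturated), so $K=\id$ sends legal instances of $\VCT_1$ to legal instances of $\VCT_2$.

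The key step is the identity $\VCT_2(\II)=[0,1]\setminus\bigcup\II=\VCT_1(\II)$ for every saturated $\II$. The inclusion $[0,1]\setminus\bigcup\II\In\VCT_2(\II)$ is immediate, since a point not covered by $\II$ is in particular not captured by $\II$, as capturing requires membership in some $I_n$. For the reverse inclusion I would invoke saturation directly: by definition a saturated $\II$ is a Vitali cover of $\bigcup\II$, so every $x\in\bigcup\II$ is captured; contrapositively, any $x\in[0,1]$ that is not captured cannot lie in $\bigcup\II$, whence $\VCT_2(\II)\In[0,1]\setminus\bigcup\II$. This yields the desired equality, and in particular exhibits $\VCT_1$ as the restriction $\VCT_2|_{\dom(\VCT_1)}$.

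Finally I would assemble the strong reduction. Taking $K=\id$ and $H=\id$, for every $\II\in\dom(\VCT_1)$ we obtain $H\,\VCT_2\,K(\II)=\VCT_2(\II)=\VCT_1(\II)\In\VCT_1(\II)$, and this set is nonempty because the absence of an eliminating subsequence forces $\lambda([0,1]\setminus\bigcup\II)>0$, exactly as argued in the proof of Theorem~\ref{thm:VCT1}. Hence $\emptyset\not=H\,\VCT_2\,K(\II)\In\VCT_1(\II)$, which gives $\VCT_1\leqSW\VCT_2$. There is no genuine obstacle beyond the set-theoretic observation that saturation collapses ``captured'' to ``covered,'' so that the map required for the reduction is simply the identity.
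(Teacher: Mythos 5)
Your proof is correct and follows essentially the same route as the paper's: both establish that $\VCT_1$ is the restriction of $\VCT_2$ to $\dom(\VCT_1)$, using saturation to show that ``not captured'' implies ``not covered'' and the Vitali Covering Theorem to guarantee non-emptiness of the solution set, so that the identity maps witness the strong reduction. Your write-up is merely somewhat more explicit than the paper's about the two inclusions and the non-emptiness argument.
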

\begin{proof}
      If 
      $\II$ is a saturated sequence of rational open intervals that contains no subsequence that eliminates $[0,1]$,
	then $\II$ does not cover $[0,1]$ by the Vitali Covering Theorem  (Theorem~\ref{thm:Vitali}) and every point $x\in[0,1]$ which is not captured by $\II$ is a point that is not covered by $\II$, that is, $x\in[0,1]\setminus\bigcup\II$.
	Hence $\VCT_1$ is a restriction of $\VCT_2$ and, in particular, $\VCT_1\leqSW\VCT_2$.
\qed
\end{proof}

On the other hand, $\VCT_2$ can be reduced to $\PC_\IR$,
as the next result shows. Within the proof we will use the following definition 
from Brown, Giusto, and Simpson~\cite{BGS02}.
A sequence $\II=(I_n)_n$ of intervals is an {\em almost Vitali cover} of a 
Lebesgue measurable set $A \subseteq [0,1]$ if for all $\varepsilon>0$ and
\[U_\varepsilon:=\bigcup\{I_n:n\in\IN\mbox{ and }\diam(I_n)<\varepsilon\}\]
it holds that $\lambda(A\setminus U_\varepsilon)=0$.
In fact, Brown, Giusto, and Simpson~\cite[Theorem~5.6]{BGS02} (see Simpson~\cite[Theorem~X.1.13]{Sim09})
proved the following strengthening of the Vitali Covering Theorem  (Theorem~\ref{thm:Vitali}): 
every almost Vitali cover $\II$ of $[0,1]$ admits a subsequence $\JJ$ that eliminates $[0,1]$. We use this result to obtain the following reduction.

\begin{proposition}
	\label{prop:VCT2-PCR}
	$\VCT_2\leqSW\PC_\IR$.
\end{proposition}
\begin{proof}
	Let $\II=(I_n)_{n\in\N}$ be a sequence of rational open intervals that does not contain a subsequence
	that eliminates $[0,1]$. By Brown, Giusto, and Simpson~\cite[Theorem~5.6]{BGS02} we obtain
	that $\II$ is not even an almost Vitali cover of $[0,1]$, that is, there exists some $n\in\IN$ such that 
	$\lambda([0,1]\setminus U_{2^{-n}})>0$, with $U_\varepsilon$ as defined above.
      We let $A_n:=[0,1]\setminus U_{2^{-n}}$ for all $n$. Clearly $A_n\In\VCT_2(\II)$ for all $n$.
	Now we compute  
      $A:=\bigcup_{n=0}^\infty (2n+A_n)$, 
       where $n+X:=\{n+x:x\in X\}$ for all $X\In\IN$.
	Then $\lambda(A)>0$ and $\PC_\IR(A)$ yields a point $x$ with $(x\;\mod\; 2)\in\VCT_2(\II)$.
	This proves $\VCT_2\leqSW\PC_\IR$.
\qed
\end{proof}

Now we prove by a direct construction that $\VCT_2$ can compute itself concurrently with $\C_\IN$.

\begin{proposition}
\label{prop:CN-VCT2}
$\C_\N \times \VCT_2 \leqSW \VCT_2$.
\end{proposition}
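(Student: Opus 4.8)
The plan is to establish this by a \emph{strong} Weihrauch reduction, encoding a $\C_\N$-instance $A$ (given as negative information, i.e.\ an enumeration of $\N\setminus A$) together with a $\VCT_2$-instance $\II$ into a single sequence $\KK\in\Int$, so that one point $x$ not captured by $\KK$ simultaneously reveals an element of $A$ and a point not captured by $\II$. First I would fix, once and for all, a partition of $[0,1]$ into adjacent rational pieces $L_n:=[p_n,p_{n+1}]$ with $p_0=0$ and $p_n\nearrow 1$, inside each $L_n$ a central rational subinterval $M_n$ with $\overline{M_n}\subseteq L_n^\circ$, and the affine bijection $\phi_n\colon[0,1]\to M_n$. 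The preprocessing map $K$ then outputs $\KK$ consisting of: (a) for every $n$ the rescaled copy $\phi_n(\II):=(\phi_n(I_k))_k$ placed on $M_n$; (b) all rational intervals contained in the ``junk region'' $U:=(0,1)\setminus\bigcup_m\overline{M_m}$, together with small intervals straddling $0$ and $1$, so that every point of $U$ as well as the endpoints and all junction points $p_n$ are captured; and (c) whenever $n$ is enumerated into $\N\setminus A$, a full Vitali cover of $\overline{M_n}$ (all rational subintervals of $M_n^\circ$ plus small intervals straddling $\partial M_n$), so that from then on every point of $M_n$ is captured. This is clearly computable.

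The recovery map $H$, which for a strong reduction may depend only on the returned point and on the fixed data $L_n,M_n,\phi_n$, works as follows: given $x\in[0,1]$ not captured by $\KK$, search for the unique $n$ with $x\in L_n^\circ=(p_n,p_{n+1})$ and output $(n,\phi_n^{-1}(x))$. I would then verify that the set of points not captured by $\KK$ is exactly $\bigcup_{n\in A}\phi_n(N)$, where $N\subseteq[0,1]$ is the nonempty set of points not captured by $\II$: for $n\notin A$ the region $M_n$ is eventually fully covered, while $U$, the junction points, and $0,1$ are captured by construction; for $n\in A$ the only intervals of $\KK$ meeting $\overline{M_n}$ are the rescaled copies $\phi_n(I_k)$, so $\phi_n(y)$ is captured by $\KK$ iff $y$ is captured by $\II$. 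Hence every point returned by $\VCT_2(\KK)$ lies in the open piece $L_n^\circ$ of some $n\in A$, so $H$ halts and returns a genuine $n\in A$ together with $\phi_n^{-1}(x)\in N$, giving $\emptyset\neq H\,\VCT_2\,K(A,\II)\subseteq(\C_\N\times\VCT_2)(A,\II)$. Note it is essential that $\partial M_n$ be captured for \emph{excluded} $n$ (otherwise $H$ could return a wrong index), whereas for $n\in A$ the points $\phi_n(0),\phi_n(1)$ may remain uncaptured; these still lie in $L_n^\circ$ and hence yield valid answers.

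The main obstacle is to show that $\KK$ really lies in $\dom(\VCT_2)$, i.e.\ that $\KK$ admits \emph{no} subsequence eliminating $[0,1]$. Here one must argue directly and resist passing through the notion of almost Vitali cover, since ``no eliminating subsequence'' is strictly stronger than ``not an almost Vitali cover'' (the single interval $(0,1)$ eliminates $[0,1]$ yet is not an almost Vitali cover). The key point is that, since $A\neq\emptyset$, I may fix some $n_0\in A$; any pairwise disjoint subsequence of $\KK$ of full measure in $[0,1]$ must in particular cover $M_{n_0}$ up to measure zero. Because the collars separating $M_{n_0}$ from the junction points are fully covered by junk intervals, and the straddling intervals for excluded pieces are kept small, the only intervals of $\KK$ meeting $M_{n_0}^\circ$ are the rescaled copies $\phi_{n_0}(I_k)$; thus a disjoint almost-everywhere cover of $M_{n_0}$ would, after applying $\phi_{n_0}^{-1}$, produce a disjoint subsequence of $\II$ eliminating $[0,1]$, contradicting $\II\in\dom(\VCT_2)$. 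This forces $\KK$ to have no eliminating subsequence and completes the verification that $K$ and $H$ realize the claimed strong reduction.
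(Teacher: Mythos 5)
Your overall architecture (rescaled copies of $\II$ indexed by $n$, a saturated cover of the junk region, and full covers killing the pieces with $n\notin A$) is the same as the paper's, and your recovery map and the observation that one must argue about eliminating subsequences directly rather than via almost Vitali covers are both correct. But the domain verification has a genuine gap, and it is located exactly at the one point your construction does not discuss carefully: the accumulation point of the pieces. Since the $M_n$ are infinitely many disjoint intervals in $\uint$ with $p_n\nearrow 1$, the point $1$ is an accumulation point of the regions, and $1$ must be \emph{captured} by $\KK$ (otherwise $1$ is a legal output of $\VCT_2(\KK)$ on which $H$ fails). Capturing $1$ forces you to include intervals $(1-\delta,1+\delta)$ for arbitrarily small $\delta>0$, and \emph{every} such interval, no matter how small, contains all but finitely many of the $M_n$. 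So your key claim --- ``the only intervals of $\KK$ meeting $M_{n_0}^\circ$ are the rescaled copies $\phi_{n_0}(I_k)$'' --- is false whenever $n_0$ is large. Concretely, take $A=\{m\}$ with $p_m>1-\delta_0$ for the largest straddling half-width $\delta_0$ you include. A disjoint subfamily of $\KK$ can then consist of one straddling interval $(1-\delta_0,1+\delta_0)$ (which swallows $M_m$ and everything to its right), together with disjoint rational subintervals exhausting the open junk region to the left of $1-\delta_0$ up to measure zero, and disjoint subintervals of the full Vitali covers of the excluded pieces $M_0,\dots$ lying there. This eliminates $\uint$, so $\KK\notin\dom(\VCT_2)$ and the reduction breaks. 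The problem is structural: any placement of one copy of $\II$ per index produces an accumulation point whose capture destroys a cofinite set of copies, so the adversary can always hide $A$ among the destroyed indices.

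This is precisely the difficulty the paper's proof is built around, and it explains its apparent overcomplication. There, each region $x_n$ carries \emph{two} families of copies of $\II$: one interval $[a_n,b_n]$ to the right of $x_n$, and a whole sequence $[a_{n,j},b_{n,j}]$, $j>n$, accumulating at $x_n$ from the left, with the index $j-n-1$ to be tested against $A$ redundantly realized infinitely often across the construction. The intervals $(x_n,1+2^{-n})$ that capture the accumulation point $1$ pairwise overlap, so a disjoint subfamily $\SS$ contains at most one of them; if it contains $(x_n,1+2^{-n})$ (the paper's Case~1), the left-hand regions $[a_{n,j},b_{n,j}]$ survive and one with $j-n-1\in A$ cannot be eliminated, while if it contains none (Case~2), the right-hand regions $[a_n,b_n]$ survive. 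To repair your proof you would need to add an analogous second, redundant layer of copies protected against whichever capturing interval of the accumulation point the adversarial subfamily chooses to use; without it the argument does not go through.
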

\begin{proof}
For the purposes of this proof we treat sequences of intervals $\II=(I_n)_n$ as sets $\II=\{I_n:n\in\IN\}$ of intervals.
All sets of intervals that we are going to use can be enumerated in a natural way. 
 
Let $A$ be an instance of $\C_\mathbb{N}$ and $\II$ an instance of  $\VCT_2$, that is $\II$ does not have
a subsequence that eliminates $[0,1]$.
By $\mathcal{I}_{[a,b]}$ we denote the image of $\mathcal{I}$ under rescaling $\uint$ to $[a,b]$.\footnote{There is a slight ambiguity here, as we need to deal with open sets ranging beyond~$\uint$. 
We shall understand these to be \emph{small enough} in the sense that we cut away everything from a certain distance $\varepsilon_n$ on.
The exact constraints that these values $\varepsilon_n$ need to satisfy are given in the proof.} By $\mathcal{S}_{(a,b)}$ we denote some saturated and computably enumerable set of intervals with $\bigcup \mathcal{S}_{(a,b)} = (a,b)$, which exists by Lemma~\ref{lem:vitalization}. 

We use points of the form $x_n:=1-\frac{1}{n}$ for $n>1$ to subdivide the unit interval $[0,1]$ into countably many regions. 
In each of these regions with $n>1$ we will place countably many scaled copies of $\II$ into certain intervals of the form $[a_n,b_n]:=[x_n+2^{-n-1}, x_n+2^{-n}]$ 
and $[a_{n,j},b_{n,j}]:=[x_n - 2^{-2j},x_n - 2^{-2j-1}]$ for $j>n$.
We construct an instance $\JJ := \JJ_{\mathrm{P}} \cup \JJ_\II \cup \JJ_{\mathrm{R}} \cup \JJ_A$
 of $\VCT_2$ in four parts:

\begin{eqnarray*}
\JJ_{\mathrm{P}} &:=& \{(x_n - 2^{-j}, x_n + 2^{-j}) : n > 1, j > n\}\cup\{(x_n, 1+2^{-n}) : n > 1\}\\[0.2cm]
\JJ_{\II} &:=& \bigcup\limits_{n > 1\atop j > n} \mathcal{I}_{[a_{n,j} , b_{n,j}]}\cup\bigcup\limits_{n > 1\atop} \mathcal{I}_{[a_n, b_n]}\\
\JJ_{\mathrm{R}} &:=& \mathcal{S}_{(-2^{-1},a_{2,3})} \cup \bigcup\limits_{n > 1\atop j > n} \mathcal{S}_{(b_{n,j}, a_{n,j+1})}  
\cup   \bigcup\limits_{n > 1\atop} (\mathcal{S}_{(x_n, a_n)}
     \cup  \mathcal{S}_{(b_{n}, a_{n+1,n+2})})\\
\JJ_{A} &:=& \bigcup\limits_{n > 1\atop n-2 \notin A}\mathcal{S}_{(a_n - \varepsilon_n, b_n + \varepsilon_n)}    
      \cup\bigcup\limits_{{n > 1\atop j > n}\atop j-n-1 \notin A}\mathcal{S}_{(a_{n,j} - \varepsilon_j, b_{n,j} + \varepsilon_j)}
\end{eqnarray*}

\begin{figure}[tb]
\begin{tikzpicture}[scale=0.6,auto=left,every node/.style={fill=black!0}]

\def\rvdots{\raisebox{1mm}[\height][\depth]{$\huge\vdots$}};

 \draw [-] (0,7) -- (20,7); 
 \draw (0,6.9) -- (0,7.1)   node [above] {$0$};
 \draw (20,6.9) -- (20,7.1)   node [above] {$1$};
 \foreach \n in {2,3,4,5}{\draw (20-20/\n,6.9) -- (20-20/\n,7.1)   node [above] {$x_\n$};}
 \node at (17,7.4) {$...$};
 \node at (18,7.4) {$x_n$};
 \node at (19,7.4) {$...$};
 
 \draw [-,line width=0.5mm,dashed] (17.5,7) -- (18.5,7); 

 \draw [-,dashed] (17.5,7) -- (0,4);
 \draw [-,dashed] (18.5,7) -- (20,4);
 \draw [-,thin] (0,4) -- (20,4); 
 \draw (10,3.9) -- (10,4.1)   node [above] {$x_n$};
 \draw[gray] (12,3.9) -- (12,4.1)   node [above] {$x_n+2^j$};
  \draw[gray] (8,3.9) -- (8,4.1)   node [above] {$x_n-2^j$};
  
 \draw [-,line width=0.5mm] (14,4) -- (17,4); 
 \draw (14,3.9) -- (14,4.1)   node [below=5pt] {$a_n$};
 \draw (17,3.9) -- (17,4.1)   node [below=3pt] {$b_n$};

 \draw [-,line width=0.5mm] (6.5,4) -- (8,4); 
 \draw (6.48,3.9) -- (6.48,4.1)   node [below=5pt] {$a_{n,j+1}$};
 \draw (8.02,3.9) -- (8.02,4.1)   node [below=3pt] {$b_{n,j+1}$};

\draw [-,line width=0.5mm] (2,4) -- (5,4); 
 \draw (2,3.9) -- (2,4.1)   node [below=5pt] {$a_{n,j}$};
 \draw (5,3.9) -- (5,4.1)   node [below=3pt] {$b_{n,j}$};

 \node at (9.3,3.4) {$...$};


\end{tikzpicture}
  
\caption{Illustration of the intervals $[a_{n,j},b_{n,j}]$ and $[a_n,b_n]$ in correct order, but oversized.}
\label{fig:intervals}
\end{figure}
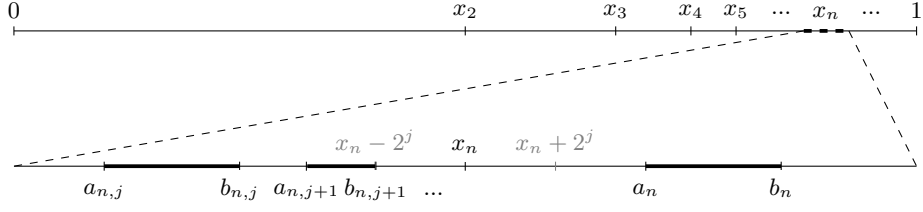

\noindent
Here $(\varepsilon_n)_n$ is a computable sequence of positive rational numbers that are subject to the following constraints
for all $n>1$ and $j>n$:
\[x_n<a_n-\varepsilon_n,\; b_n+\varepsilon_n<a_{n+1,n+2}-\varepsilon_{n+2}\mbox{ and }
b_{n,j}+\varepsilon_j<a_{n,j+1}-\varepsilon_{j+1}.\]
In Figure~\ref{fig:intervals} the construction is visualized.
Intuitively, we capture the point $1$ and all points $x_n=1-\frac{1}{n}$ using $\JJ_{\mathrm{P}}$. 
Using $\JJ_\II$ we place scaled copies of $\II$ into the intervals $[a_n,b_n]$ and $[a_{n,j},b_{n,j}]$ for $n>1$ and $j>n$.
The remainder of the unit interval is captured using $\JJ_{\mathrm{R}}$. 
Finally, those regions not corresponding to an index from $A$ are rendered invalid responses by capturing them using $\JJ_A$,
where the constraints on $\varepsilon_n$ above guarantee that no neighbor regions are touched.

Any point not captured by $\JJ$ must lie in one of the regions designated in the definition of $\JJ_\II$, and, as these are separated, 
we can compute the parameters of the region (thus producing the answer for the instance $A$ of $\C_\mathbb{N}$), 
and then scale the point back up to produce the answer to the instance $\II$ of $\VCT_2$.

It remains to prove that $\JJ$ actually is a valid input to $\VCT_2$, that is, that no collection $\SS\In\JJ$ of disjoint
intervals eliminates $\uint$.
Let $\mathcal{S} \subseteq \JJ$ be a disjoint collection of intervals. We distinguish two cases:

\smallskip

\noindent {\bf Case 1}: $(\exists n)\; (x_n, 1+2^{-n}) \in \mathcal{S}$. 
Then no set of the form ${(x_n - 2^{-j}, x_n + 2^{-j})}$ can be in $\mathcal{S}$. 
Choose $j$ such that $j - n - 1 \in A$. 
We claim that $\mathcal{S}$ cannot eliminate $[a_{n,j}, b_{n,j}]$: We have already seen that under the given conditions, 
we have for every $U \in \mathcal{S} \cap \JJ_{\mathrm{P}}$ that
$U \cap [a_{n,j} , b_{n,j}] = \emptyset$.
The same is true for ${U \in \mathcal{S} \cap (\JJ_{\mathrm{R}} \cup \JJ_A)}$ by construction and because $j - n - 1 \in A$. 
Thus, the only sets which could contribute to eliminating the interval $[a_{n,j},b_{n,j}]$ come from $\JJ_\II$, and more specifically, 
$\II_{[a_{n,j} , b_{n,j}]}$; 
but if these sets would eliminate $[a_{n,j} , b_{n,j}]$, then $\II$ would eliminate $\uint$, which is impossible.

\smallskip

\noindent {\bf Case 2}: $(\forall n)\; (x_n, 1+2^{-n}) \notin \mathcal{S}$. 
Let $n$ be such that $n - 2 \in A$. We claim that $\mathcal{S}$~cannot eliminate 
$[a_n, b_n]$. 
For $U \in \mathcal{S} \cap \JJ_{\mathrm{P}}$ we have that $U \cap [a_n, b_n] = \emptyset$ because 
$(x_n-2^{-j},x_n+2^{-j})\cap[a_n,b_n]=\emptyset$ for all $j>n$.
 For $U \in \mathcal{S} \cap \JJ_{\mathrm{R}}$ the same statement holds by construction; 
and for $U \in \mathcal{S} \cap \JJ_A$ it holds since $n - 2 \in A$. 
If $\JJ_\II$ would eliminate $[a_n, b_n]$, then $\II$ would eliminate $\uint$, which is impossible.
\qed
\end{proof}

Using Fact~\ref{fact:WWKL}, Theorem~\ref{thm:VCT1} and Propositions~\ref{prop:VCT1-VCT2}, \ref{prop:VCT2-PCR} and \ref{prop:CN-VCT2}
we obtain the following characterization of $\VCT_2$.

\begin{corollary}
\label{cor:VCT2}
$\VCT_2\equivSW\PC_\IR$.
\end{corollary}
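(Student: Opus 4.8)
The plan is to assemble the final equivalence $\VCT_2\equivSW\PC_\IR$ purely from the building blocks already established in the excerpt, without any new direct construction. The target is an equivalence, so I must prove two strong reductions, $\VCT_2\leqSW\PC_\IR$ and $\PC_\IR\leqSW\VCT_2$, and then read off the second displayed equivalence $\PC_\IR\equivSW\WWKL\times\C_\IN$ from Fact~\ref{fact:WWKL}(2). The first direction is immediate: it is exactly the content of Proposition~\ref{prop:VCT2-PCR}. So the real work is the reverse direction, and the strategy is to route it through $\VCT_1$ and $\C_\IN$.

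For $\PC_\IR\leqSW\VCT_2$, the key observation is that $\PC_\IR$ strongly reduces to a product of the pieces we already control. By Fact~\ref{fact:WWKL}(2) we have $\PC_\IR\equivSW\WWKL\times\C_\IN$, and by Theorem~\ref{thm:VCT1} together with Fact~\ref{fact:WWKL}(1) we have $\WWKL\equivSW\PC_{[0,1]}\equivSW\VCT_1$. Chaining these, it suffices to show $\VCT_1\times\C_\IN\leqSW\VCT_2$. Here I would invoke Proposition~\ref{prop:VCT1-VCT2}, which gives $\VCT_1\leqSW\VCT_2$, to replace the $\VCT_1$ factor by $\VCT_2$, reducing the goal to $\C_\IN\times\VCT_2\leqSW\VCT_2$. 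But that last reduction is precisely Proposition~\ref{prop:CN-VCT2}. Thus the assembly is
\[
\PC_\IR\equivSW\WWKL\times\C_\IN\equivSW\VCT_1\times\C_\IN\leqSW\VCT_2\times\C_\IN=\C_\IN\times\VCT_2\leqSW\VCT_2,
\]
where the middle equivalence uses Theorem~\ref{thm:VCT1} and Fact~\ref{fact:WWKL}(1) applied to one factor, the first $\leqSW$ uses Proposition~\ref{prop:VCT1-VCT2} applied to one factor of the product, and the final $\leqSW$ is Proposition~\ref{prop:CN-VCT2}.

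The point I expect to need the most care is the bookkeeping around strong reducibility and its behaviour under products and monotonicity. Strong Weihrauch reducibility is transitive and is a congruence for $\times$ in each argument, so from $\VCT_1\equivSW\WWKL$ I may substitute inside the product $\WWKL\times\C_\IN$, and from $\VCT_1\leqSW\VCT_2$ I may conclude $\VCT_1\times\C_\IN\leqSW\VCT_2\times\C_\IN$; both of these monotonicity facts are standard properties of $\leqSW$ that are implicitly available from the preliminaries. One should double-check that $\times$ is commutative up to $\equivSW$, so that $\VCT_2\times\C_\IN\equivSW\C_\IN\times\VCT_2$ and Proposition~\ref{prop:CN-VCT2} applies in the orientation stated. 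With those routine lattice-theoretic properties in hand, the two reductions combine to give $\VCT_2\equivSW\PC_\IR$, and appending Fact~\ref{fact:WWKL}(2) yields the full statement $\VCT_2\equivSW\PC_\IR\equivSW\WWKL\times\C_\IN$.
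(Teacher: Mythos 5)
Your proof is correct and follows essentially the same route as the paper, which derives the corollary by citing exactly these ingredients (Fact~\ref{fact:WWKL}, Theorem~\ref{thm:VCT1}, and Propositions~\ref{prop:VCT1-VCT2}, \ref{prop:VCT2-PCR}, \ref{prop:CN-VCT2}) without spelling out the chain. Your write-up simply makes explicit the product-monotonicity and transitivity bookkeeping that the paper leaves implicit.
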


We note that the proof of Proposition~\ref{prop:VCT2-PCR} shows that we can extend the domain of 
$\VCT_2$  to sequences $\II$ of intervals that are not almost Vitali covers of $[0,1]$
and Corollary~\ref{cor:VCT2} remains correct for this generalized version of $\VCT_2$.

\section{Countable additivity}
\label{sec:additivity}

In reverse mathematics Brown, Giusto, and Simpson~\cite[Theorem~3.3]{BGS02} (see also Simpson~\cite[Theorem~X.1.9]{Sim09}) have
discussed countable additivity of measures and condition 3.\ of Theorem~\ref{thm:BGS02} turned out to characterize this property.
In this section we would like to analyze this condition in the Weihrauch lattice and
we formulate the condition and a contrapositive version of it in a slightly different way.

\begin{enumerate}
\item Any $\II=(I_n)_n$ that covers $[0,1]$
        satisfies $\sum_{n=0}^\infty\lambda(I_n)\geq1$.
\item For any non-disjoint $\II=(I_n)_n$ that satisfies $\sum_{n=0}^\infty\lambda(I_n)<1$,
        there exists a point $x\in[0,1]\setminus\bigcup\II$.
\end{enumerate}

By a {\em non-disjoint} $\II=(I_n)_n$ we mean one that satisfies $I_i\cap I_j\not=\emptyset$ for some $i\not=j$.
For the correctness of the second statement non-disjointness is not relevant. 
However, it matters for the computational content.
While the first statement has no immediate computational content (more precisely, any reasonable straightforward formalization is computable),
the second one turns out to be equivalent to $*\dash\WWKL$, which we define below.
First we formalize the second statement above as a multi-valued function, which we call the {\em Additive Covering Theorem}.

\begin{definition}[Additive Covering Theorem]\rm
The {\em Additive Covering Theorem} is the multi-valued function $\ACT:\In\Int\mto[0,1], \II\mapsto[0,1]\setminus\bigcup\II$,
where $\dom(\ACT)$ is the set of all non-disjoint $\II=(I_n)_n$ with
$\sum_{n=0}^\infty\lambda(I_n)<1$.
\end{definition}

In order to define $*\dash\WWKL$, we recall that 
for a sequence $f_i:\In X_i\mto Y_i$ we can define the {\em coproduct} $\bigsqcup_{i=0}^\infty f_i:\In\bigsqcup_{i=0}^\infty X_i\mto\bigsqcup_{i=0}^\infty Y_i$,
where $\bigsqcup_{i=0}^\infty Z_i$ denotes the disjoint union of the sets $Z_i$. 
Now we define $*\dash\WWKL:=\bigsqcup_{n=0}^\infty\P_{>2^{-n}}\C_{[0,1]}$, where $\P_{>\varepsilon}\C_{[0,1]}$ is the choice principle
for closed subsets $A\In[0,1]$ with $\lambda(A)>\varepsilon$, as defined in Section~\ref{sec:preliminaries}.
Hence, intuitively, $*\dash\WWKL$ takes as input a number ${n\in\IN}$ together with a closed set $A$ of measure $\lambda(A)>2^{-n}$
and has to produce a point $x\in A$. This could equivalently be defined using quantitative versions of $\WWKL$, hence the name $*\dash\WWKL$
(see Brattka, Gherardi and H\"olzl~\cite[Proposition~7.2]{BGH15a}).
Now we can formulate and prove our main result on $\ACT$.

\begin{theorem}
$\ACT\equivSW*\dash\WWKL$.
\end{theorem}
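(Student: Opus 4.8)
The plan is to establish the equivalence $\ACT\equivSW*\dash\WWKL$ by proving the two reductions separately. The key conceptual point is that $*\dash\WWKL$ is the coproduct $\bigsqcup_{n=0}^\infty\P_{>2^{-n}}\C_{[0,1]}$, so an instance of it is a pair $(n,A)$ with $\lambda(A)>2^{-n}$, while an instance of $\ACT$ is a non-disjoint sequence $\II$ of rational intervals with $\sum_n\lambda(I_n)<1$. The bridge between these is that such a sequence $\II$ gives us, via upper semicomputability of Lebesgue measure on the open set $\bigcup\II$, an explicit rational upper bound on $\lambda(\bigcup\II)$ strictly below $1$, and hence a computable $n$ with $\lambda([0,1]\setminus\bigcup\II)>2^{-n}$.

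For the reduction $\ACT\leqSW*\dash\WWKL$, I would proceed as follows. Given $\II$ with $\sum_n\lambda(I_n)<1$, first search for a finite initial segment $I_0,\dots,I_k$ and a rational $q<1$ with $\sum_{i=0}^k\lambda(I_i)+\sum_{i>k}\lambda(I_i)\le q$; more simply, since the tail sums converge, compute a rational $q<1$ and an index after which the partial sums stay below $q$, giving $\lambda(\bigcup\II)\le\sum_n\lambda(I_n)\le q<1$. Then pick a computable $n$ with $2^{-n}<1-q$, so that the closed set $A:=[0,1]\setminus\bigcup\II$ satisfies $\lambda(A)\ge 1-q>2^{-n}$. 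The set $A$ is computable in $\AA_-([0,1])$ from $\II$ since its complement $\bigcup\II$ is given as a union of rational open intervals. Feeding the pair $(n,A)$ into $*\dash\WWKL$ returns a point $x\in A=[0,1]\setminus\bigcup\II=\ACT(\II)$, and since the output map is simply the identity on the point $x$, this is a strong reduction.

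For the converse $*\dash\WWKL\leqSW\ACT$, given an instance $(n,A)$ with $\lambda(A)>2^{-n}$, I would use the negative-information representation of $A$ to enumerate rational open intervals whose union is $[0,1]\setminus A$, obtaining a sequence $\II$ of intervals with $\bigcup\II=[0,1]\setminus A$ and hence $\lambda(\bigcup\II)<1-2^{-n}<1$. The only subtlety is meeting the two domain requirements of $\ACT$: the summability condition $\sum_n\lambda(I_n)<1$ (as opposed to merely $\lambda(\bigcup\II)<1$), and the non-disjointness condition. The non-disjointness is harmless to arrange: since the enumeration of $[0,1]\setminus A$ is forced to be nonempty whenever $\lambda(A)<1$, we can always add a redundant interval overlapping an existing one, or exploit that overlapping rational intervals are automatically produced; adding one tiny duplicated interval does not change the union and contributes negligible measure. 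The summability condition is the genuine obstacle and the main technical point: the naive enumeration of a cover of $[0,1]\setminus A$ may have $\sum_n\lambda(I_n)$ far exceeding $1$ due to massive overlaps, so I must enumerate the complement \emph{economically}. The plan is to enumerate $[0,1]\setminus A$ as a union of rational intervals whose total length is controlled, for instance by only adding a new interval when it contributes genuinely new measure beyond what has already been enumerated, keeping a running bound on $\sum_i\lambda(I_i)$ below $1-2^{-n-1}$; this is possible precisely because $\lambda([0,1]\setminus A)<1-2^{-n}$. Once $\II$ is a legitimate $\ACT$-instance, $\ACT(\II)=[0,1]\setminus\bigcup\II=A$, and applying $\ACT$ returns a point $x\in A$, which is exactly the required output for $\PC_{[0,1]}$ on $(n,A)$; the output transformation is again the identity, giving the strong reduction.

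I expect the main obstacle to be this economical enumeration of the complement with controlled total length: one must refine the given negative-information enumeration of $A$ into a sequence of rational intervals that covers the same open set but whose lengths sum to something strictly below $1$, using the measure gap $2^{-n}$ as budget. The standard device is to process the incoming rational balls and, before committing the measure of a new interval, subtract off the overlap with previously committed intervals (which is computable for finite rational unions), thereby ensuring $\sum_i\lambda(I_i)$ tracks $\lambda(\bigcup\II)$ up to a prescribed error and never reaches $1$.
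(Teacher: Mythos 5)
Your reduction $*\dash\WWKL\leqSW\ACT$ is essentially the paper's argument: re-enumerate the complement of $A$ by rational intervals whose pairwise overlaps are controlled by the budget $2^{-n-1}$, so that $\sum_i\lambda(I_i)$ tracks $\lambda\left(\bigcup\II\right)$ and stays below $1$, and force one small overlap to meet the non-disjointness requirement. That direction is fine.

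The direction $\ACT\leqSW*\dash\WWKL$ has a genuine gap. You propose to ``compute a rational $q<1$ and an index after which the partial sums stay below $q$,'' but the predicate $\sum_{n}\lambda(I_n)\leq q$ is a $\Pi^0_1$ property of the input sequence and cannot be certified from any finite prefix: whatever finite segment your search has inspected when it commits to $q$, the tail can still contribute enough additional length to push the total above $q$ while keeping it below $1$, so no such search terminates correctly. (Relatedly, $\lambda\left(\bigcup\II\right)$ is \emph{lower} semi-computable from an enumeration of rational intervals, not upper semi-computable as you assert; what is upper semi-computable is $\lambda(A)$ for closed $A$ given by negative information, and neither mode of approximation yields the positive lower bound $\lambda(A)>2^{-n}$ that $*\dash\WWKL$ requires as part of its input.) This is precisely where the non-disjointness hypothesis --- which your write-up treats as a harmless technicality relevant only to the other direction --- does its work. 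The paper's argument searches for indices $i\not=j$ and $k$ with $\varepsilon:=\lambda(I_i\cap I_j)>2^{-k}$; this search terminates because two overlapping rational open intervals have an overlap of positive rational length. Since that overlap is counted twice in $\sum_n\lambda(I_n)$, one gets $\lambda\left(\bigcup_n I_n\right)+\varepsilon\leq\sum_n\lambda(I_n)<1$, hence $\lambda([0,1]\setminus\bigcup\II)>\varepsilon>2^{-k}$, and the pair $(k,A)$ is a legitimate input to $\P_{>2^{-k}}\C_{[0,1]}$. Without this device your reduction has no way to produce the index that the coproduct $*\dash\WWKL$ demands, so the first half of your proof does not go through as written.
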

\begin{proof}
We first prove $\ACT\leqSW*\dash\WWKL$.
Let $\II=(I_n)_n$ be a given non-disjoint sequence of intervals such that $\sum_{n=0}^\infty\lambda(I_n)<1$.
Then we can search for some numbers ${i,j,k\in\IN}$ such that $\varepsilon:=\lambda(I_i\cap I_j)>2^{-k}$.
In this situation we obtain by countable additivity
$\lambda\left(\bigcup_{n=0}^\infty I_n\right)+\varepsilon\leq\sum_{n=0}^\infty\lambda(I_n)<1$.
Hence we obtain for the closed set $A:=[0,1]\setminus\bigcup\II$ that
\[\lambda(A)\geq1-\sum_{n=0}^\infty\lambda(I_n)+\varepsilon>\varepsilon>2^{-k}.\]
Therefore, we can find a point in $A$ using $\P_{>2^{-k}}\C_{[0,1]}(A)$.
This proves the desired reduction $\ACT\leqSW*\dash\WWKL$.

We now prove $*\dash\WWKL\leqSW\ACT$.
Given $k\in\IN$ and a closed set $A\In[0,1]$ such that $\lambda(A)>2^{-k}$ we need to find a point $x\in A$.
The set $A$ is given by a sequence $\JJ$ of open intervals with $A=[0,1]\setminus\bigcup\JJ$.
We can now computably convert the sequence $\JJ$ into a non-disjoint sequence $\II=(I_n)_n$ of open intervals
such that $A=[0,1]\setminus\bigcup\II$ and 
\[\sum_{n=0}^\infty\lambda(I_n)\leq\lambda\left(\bigcup_{n=0}^\infty([0,1]\cap I_n)\right)+2^{-k-1}.\]
This can be achieved if for every $J$ in $\JJ$ we select finitely many intervals $I_n\In J$ such that all intervals selected so far cover $J$  and
such that the overlapping measure of $I_n$ with the
union of the previous intervals (and the exterior of $[0,1]$) is at most $2^{-k-1-n-1}$ for each $n\in\IN$ (and non-zero for at least one $n$).
Since $\lambda(A)>2^{-k}$ we obtain $\lambda\left(\bigcup_{n=0}^\infty([0,1]\cap I_n)\right)<1-2^{-k}$ and the above condition implies
$\sum_{n=0}^\infty\lambda(I_n)<1-2^{-k}+2^{-k-1}<1$
and hence $\ACT(\II)=A$.
This yields the desired reduction.
\qed
\end{proof}

Like $\WWKL\times\C_\IN$ the problem $*\dash\WWKL$ can be seen as a uniform modification of $\WWKL$ that is
indistinguishable from $\WWKL$ when seen from the non-uniform perspective of reverse mathematics.

\begin{figure}[htb]
\begin{tikzpicture}[scale=.5,auto=left,every node/.style={fill=black!15}]

\def\rvdots{\raisebox{1mm}[\height][\depth]{$\huge\vdots$}};


  \node (CR) at (6,16) {$\C_\IR\equivSW\WKL\times\C_\IN$};
   \node (CN) at (26,14) {$\C_\IN$};
  \node (WKL) at (6,14) {$\C_{[0,1]}\equivSW\WKL$};
  \node (WWKLD) at (16,16) {$\VCT_2\equivSW\PC_\IR\equivSW\WWKL\times\C_\IN$};
  \node (WWKL) at (16,14) {$\VCT_1\equivSW\PC_{[0,1]}\equivSW\WWKL$};
  \node (SWWKL) at (16,12) {$\ACT\equivSW*\dash\WWKL$};
  \node (VCT0) at (16,10) {$\VCT_0$};

  \foreach \from/\to in {
  WWKLD/WWKL,
  WWKLD/CN,
  WWKL/SWWKL,
  WKL/WWKL,
  WKL/VCT0,
  CR/WKL,
  CR/WWKLD}
  \draw [->,thick] (\from) -- (\to);

  \foreach \from/\to in {
  SWWKL/VCT0,
  CN/VCT0}
  \draw [->,thick,dashed] (\from) -- (\to);

\end{tikzpicture}
  
\caption{The Vitali Covering Theorem in the Weihrauch lattice. Strong Weihrauch reductions $f\leqSW g$ are indicated by a solid arrow $f\leftarrow g$ and 
similarly ordinary Weihrauch reductions are indicated by a dashed arrow $f\dashleftarrow g$.}
\label{fig:Vitali}
\end{figure}
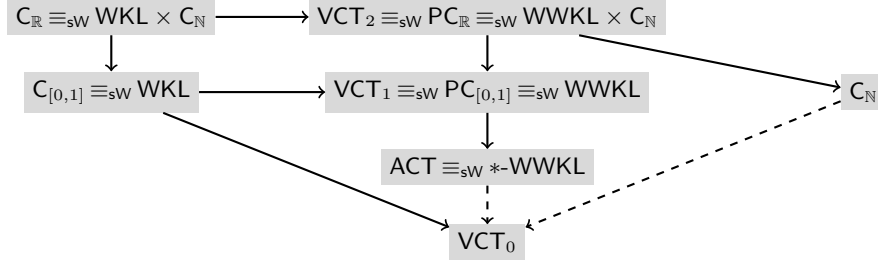

\section{Conclusions}

We have demonstrated that the Vitali Covering Theorem and related results split into several uniform equivalence classes
when analyzed in the Weihrauch lattice. We have summarized the results in the diagram in Figure~\ref{fig:Vitali}. 
The diagram also indicates some equivalence classes in 
the neighborhood that are related to Weak K\H{o}nig's Lemma $\WKL$. 
These classes have not been discussed in this article and some related results can be found in
Brattka, de Brecht and Pauly~\cite{BBP12} and Brattka, Gherardi and H\"olzl~\cite{BGH15a}.

\bibliographystyle{splncs03}
\bibliography{C:/Users/i11avabr/Dropbox/Bibliography/lit}

\end{document}